\newcommand{\abs}[1]{\left\vert#1\right\vert}
\newcommand\dv{{\, \mathrm d v}}
\newcommand{\dd}{{\, \mathrm d}}
\newcommand{\td}[2]{\frac{\mathrm d #1}{\mathrm d #2}}
\newcommand{\sk}{\smallskip}
\newcommand{\mk}{\medskip}
\def\R{\mathbb{R}}
\newtheorem{thm}{Theorem}
\newtheorem{lem}[thm]{Lemma}
\theoremstyle{definition}
\theoremstyle{remark}
\newtheorem{rem}[thm]{Remark}
\def\signra{\bigskip \begin{center} {\sc Ricardo
      Alonso \par\vspace{3mm} Institute for Pure and Applied Mathematics 
     \par (IPAM) UCLA - (CAAM) Rice University \par 
    6100 Main Street -- MS 134,
    Houston, TX 77005-1892, USA\par \vspace{3mm} email:}
    \texttt{rja2@rice.edu} \end{center}}
\def\signjc{\bigskip \begin{center} {\sc Jos\'e
      A. Ca\~nizo\par\vspace{3mm} Centre for Mathematical
    Sciences\par University of Cambridge \par Cambridge CB3
    0WA, UK\par\vspace{3mm} email:}
    \texttt{j.a.canizo@dpmms.cam.ac.uk}\end{center}}
\def\signig{\bigskip \begin{center} {\sc Irene
      Gamba\par\vspace{3mm}
      Department of Mathematics\par The
    University of Texas at Austin\par Texas
    78712, USA\par\vspace{3mm} e-mail:}
    \texttt{gamba@math.utexas.edu} \end{center}}
\def\signcm{\bigskip \begin{center} {\sc Cl\'ement
      Mouhot\par\vspace{3mm} Centre for Mathematical
    Sciences\par University of Cambridge \par Cambridge CB3
    0WA, UK\par\vspace{3mm} email:}
    \tt{C.Mouhot@dpmms.cam.ac.uk} \end{center}}
\title[Exponential moments in the Boltzmann equation]{A new approach
  to the creation and propagation of exponential moments in the
  Boltzmann equation}
\author[R. Alonso, J. A. Ca\~nizo, I. Gamba, and C. Mouhot]{Ricardo Alonso 
  \and Jos\'e A. Ca\~nizo
  \and Irene M. Gamba
  \and Cl\'ement Mouhot}
\thanks{RA acknowledges the support from the NSF grant DMS-0439872,
  IPAM and CAAM. JAC was supported by the project MTM2011-27739-C04
  from DGI-MICINN (Spain) and 2009-SGR-345 from AGAUR-Generalitat de
  Catalunya. IMG has been partially funded by NSF grant DMS-1109625
  and by DMS FRG-0757450.  CM \& JAC acknowledge the support from the
  ERC grant MATKIT. Support from the Institute from Computational
  Engineering and Sciences at the University of Texas at Austin is
  also gratefully acknowledged.}
\begin{document}
\maketitle

\begin{abstract}
  We study the creation and propagation of exponential moments of
  solutions to the spatially homogeneous $d$-dimensional Boltzmann
  equation. In particular, when the collision kernel is of the form
  $|v-v_*|^\beta b(\cos(\theta))$ for $\beta \in (0,2]$ with
  $\cos(\theta)= |v-v_*|^{-1}(v-v_*)\cdot \sigma$ and $\sigma \in
  \mathbb{S}^{d-1}$, and assuming the classical cut-off condition $
  b(\cos(\theta))$ integrable in $\mathbb{S}^{d-1}$, we prove that there exists
  $a > 0$ such that moments with weight $\exp(a \min\{ t,1\}
  |v|^\beta)$ are finite for $t>0$, where $a$ only depends on the
  collision kernel and the initial mass and energy. We propose a novel
  method of proof based on a single differential inequality for the
  exponential moment with time-dependent coefficients.
\end{abstract}

\mk 

\textbf{Mathematics Subject Classification (2000)}: 26D10, 35A23, 76P05, 82C40, 82D10

\sk 

\textbf{Keywords}: Boltzmann equation; polynomial moments; exponential
moments; Povzner's estimates; differential inequality. 

\mk 

\tableofcontents

\section{Introduction}
\label{sec:intro}

We consider the spatially homogeneous Boltzmann equation in dimension
$d \geq 2$ with initial condition $f_0 \ge 0$, given by
\begin{equation}
  \label{eq:Boltzmann}
  \partial_t f = Q(f,f),  \quad f(t,\cdot) = f_0
\end{equation}
where $f = f(t,v) \ge 0$ is a non-negative function depending on time
$t \geq 0$ and velocity $v \in \R^d$, with $d \geq 2$. We will assume
throughout this paper that $f_0$ has finite mass and energy, i.e.,
\begin{equation*}
  \label{eq:finite-mass-energy}
  \left\| f_0 \right\|_{L^1(1+|v|^2)} := 
  \int_{\R^d} (1+|v|^2) f_0(v) \dv < +\infty.
\end{equation*}
For $p \in [1,+\infty]$, we denote by $L^p$ the Lebesgue spaces of
$p$-integrable real functions on $\R^d$, and the notation $L^p(w(v)
\dv)$ (or simply $L^p(w(v))$) denotes the $L^p$ space with weight
$w(v)$. The \emph{collision operator} $Q(f,f)$ is given by
\begin{equation*}
  \label{eq:Q}
  Q(f,f)(v) := \int_{\R^d \times \mathbb {\mathbb S}^{d-1}} B(\abs{v-v_*}, \cos
  \theta)   (f'_* f' - f_* f) \dd v_* \dd \sigma,
\end{equation*}
representing the total rate of binary interactions due to particles
taking the direction of $v$ due to collisions, minus those that were
knocked out from the $v$ direction. We follow the usual notation $f
\equiv f(v)$, $f_* \equiv f(v_*)$, $f' \equiv f(v')$, $f_*' \equiv
f(v_*')$.  The vectors $v',v_*'$, which denote the velocities after an
elastic collision of particles with velocities $v,v_*$, are given by
\begin{equation*}
  \label{eq:post-collision}
  v' := \frac{v+v_*}{2} + \frac{\abs{v-v_*}}{2} \sigma,
  \qquad
  v_*' := \frac{v+v_*}{2} - \frac{\abs{v-v_*}}{2} \sigma.
\end{equation*}
The variable $\theta$ denotes the angle between $v-v_*$ and $\sigma$,
where $\sigma$ is the unit vector in the direction of the
postcollisional relative velocity.
On the collision kernel $B$ we assume that for some $\beta \in (0,2]$
\begin{equation}
  \label{eq:B}
  B(\abs{v-v_*}, \cos \theta)
  = \abs{v-v_*}^\beta\, b(\cos \theta),
\end{equation}
with the following cut-off assumption:
\begin{equation}
  \label{eq:b-Lq}
  b \in L^1\left([-1,1], (1-z^2)^{\frac{d-3}{2}}\dd z\right).
\end{equation}
If we define $\tilde{b}(\sigma) := b(e_1 \cdot \sigma)$, with $e_1 \in
{\mathbb S}^{d-1}$ any fixed vector, then \eqref{eq:b-Lq} is equivalent to
$\tilde{b} \in L^1({\mathbb S}^{d-1})$, which can be easily seen by a
spherical change of coordinates.

Throughout the paper $f$ always represents a solution to equation
\eqref{eq:Boltzmann} on $[0,+\infty)$ (in the sense of, e.g.,
\cite{MR1697562}) and we always write, for $p \geq 0$ (not necessarily
an integer),
\begin{equation}
  \label{eq:def-m_p}
  m_p = m_p(t) := \int_{\R^d} f(t,v) \abs{v}^{p} \dv.
\end{equation}

\medskip

\noindent {\bf Main results.} It is known that moments of order $p >
2$ and exponential moments ($L^1$-exponentially weighted estimates)
with weight up to $\exp(a|v|^2)$ for some $a > 0$ are propagated by
equation \eqref{eq:Boltzmann}
\cite{MR1233644,citeulike:2703167,B88,Bobylev1996Moment,GPV09}; that
is, they are finite for all times $t > 0$ if they are initially
finite, however with a deterioration of the constant $a$. In
\cite{citeulike:2703167} it was proved that in fact equation
\eqref{eq:Boltzmann} with $\beta > 0$ instantaneously creates all
moments of orders $p > 2$, which then remain finite for all times $t >
0$. Here the assumption that $\beta > 0$ is necessary, since the
result is not true for Maxwell molecules for instance
\cite{IkTr:56}. Moreover, moments with exponential weight up to
$\exp(a |v|^{\beta/2})$ for some constant $a>0$ were also shown to be
instantaneously created in \cite{MR2264623,citeulike:2703083}. In all
these proofs it was crucial to assume that the angular function $b$ is
in $L^q\big([-1,1], (1-z^2)^{\frac{d-3}{2}}\dd z\big)$ for $q>1$ as
done in \cite{BGP04,GPV09,AL2010}. We also refer to the recent work
\cite{Lu-Mouhot} for moment production estimates in the so-called
non-cutoff case, in which proofs are based on the optimization of the
traditional inductive argument
\cite{B88,Bobylev1996Moment,GPV09,MR2264623,citeulike:2703083}.

We have several noticeable contributions in this paper.  Indeed, we
can extend the existing propagation and creation of
$L^1$-exponentially weighted estimates to include the classical
cut-off assumption $b \in L^1\big([-1,1], (1-z^2)^{\frac{d-3}{2}}\dd
z\big)$ without using the iterative methods developed in
\cite{BGP04,GPV09,AL2010}), and also we slightly relax the assumptions
on the initial data by requiring only finite mass and energy, and not
necessarily finite entropy as in previous works on creation of moments
\cite{citeulike:2703167}.  In addition, we improve the weights for the
creation of $L^1$-exponentially weighted moments, with a weight up to
$\exp(a |v|^{\beta})$ (hence removing the $1/2$ factor which was
present in \cite{MR2264623,citeulike:2703083}) for solutions with
finite mass and energy, assuming only an integrability condition on
$b$.  More specifically, Theorem \ref{thm:exp-moment-creation} gives
an explicit rate of appearance of exponential moments by showing that
the coefficient multiplying $|v|^\beta$ in the exponential weight can
be taken linearly growing in time.

The most important point is that we introduce a new method of proof
that not only does not need iterative arguments but also allows for
all these improvements. This approach is also used in Theorem
\ref{thm:exp-moment-propag} for the propagation of exponential
moments, and extends these results to classical cut-off assumptions on
the angular cross section $b$.

\bigskip

\begin{thm}[Creation of exponential moments]
  \label{thm:exp-moment-creation}
  Let $f$ be an energy-conserving solution to the homogeneous
  Boltzmann equation \eqref{eq:Boltzmann} on $[0,+\infty)$ with
  initial data $f^0 \in L^1(1+\abs{v}^2)$, and assume \eqref{eq:B} and
  \eqref{eq:b-Lq} with $\beta \in (0,2]$. Then there are some
  constants $C, a > 0$ (which depend only on $b$, $\beta$ and the
  initial mass and energy) such that
  \begin{equation*}
    \label{eq:exp-generation-clean}
    \int_{\R^d} f(t,v) \,
    \exp \big( a \min\{ t, 1\} \abs{v}^\beta \big) \dd v
    \leq
    C
    \quad
    \text{ for } t \geq 0.
  \end{equation*}
\end{thm}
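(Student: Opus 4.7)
The plan is to introduce the time-dependent exponential moment
\[
\mathcal{E}_a(t) := \sum_{k=0}^\infty \frac{(a z(t))^k}{k!}\, m_{\beta k}(t),
\qquad z(t) := \min\{t,1\},
\]
with $a>0$ to be fixed later, and to derive a single closed differential inequality that forces $\mathcal{E}_a(t) \leq C$ uniformly in $t \geq 0$; by monotone convergence this quantity is exactly the integral appearing in the theorem.

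The first step is to establish polynomial moment ODEs from the weak form of \eqref{eq:Boltzmann} together with a sharp Povzner-type estimate valid under only the $L^1$ cut-off \eqref{eq:b-Lq}. I expect them to take the schematic form
\[
\frac{d m_{\beta k}}{dt} \leq -K_k\, m_{\beta k + \beta} + \sum_{j=1}^{k-1} \binom{k}{j}\, \bigl( m_{\beta j}\, m_{\beta(k-j)+\beta} + m_{\beta j+\beta}\, m_{\beta(k-j)} \bigr),
\]
where $K_k \geq \kappa k$ for some $\kappa>0$ depending only on $b$, $\beta$, and the initial mass and energy. The linear growth of $K_k$ in $k$, obtained by exploiting the angular average in the Povzner splitting, is the crucial input that will make the dissipation dominate the gain.

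Then I would differentiate $\mathcal{E}_a(t)$ in time. Writing $\widetilde{\mathcal{E}}_a(t) := \sum_{k \geq 0} \frac{(a z(t))^k}{k!}\, m_{\beta k + \beta}(t)$ for the $\beta$-shifted exponential moment, the contribution from $z'(t)$ (after reindexing $k \mapsto k-1$) produces $a z'(t)\, \widetilde{\mathcal{E}}_a(t)$; the dissipative term $-K_k m_{\beta k + \beta}$, using $K_k \geq \kappa k$ and the same reindexing, contributes at most $-\kappa a z(t)\, \widetilde{\mathcal{E}}_a(t)$; and the bilinear remainder factorises, via the Cauchy-product identity $\binom{k}{j}/k! = 1/(j!(k-j)!)$, into $2C\,(\mathcal{E}_a(t) - m_0)(\widetilde{\mathcal{E}}_a(t) - m_\beta(t))$. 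Collecting everything yields a closed differential inequality of the form
\[
\frac{d \mathcal{E}_a}{dt} \leq \bigl( a z'(t) - \kappa a z(t) + 2 C\, \mathcal{E}_a(t) \bigr)\, \widetilde{\mathcal{E}}_a(t) + R(t),
\]
with $R(t)$ controlled by the first few polynomial moments. Using the elementary lower bound $\widetilde{\mathcal{E}}_a(t) \geq \mathcal{E}_a(t) - m_0\, e^{a z(t)}$ (which follows from $|v|^\beta \geq 1$ on $\{|v|\geq 1\}$), a continuity/barrier argument on $\mathcal{E}_a$ with $a$ chosen small enough that the bracket is strictly negative whenever $\mathcal{E}_a$ exceeds a fixed threshold yields the desired uniform bound. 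The choice $z(t) = \min\{t,1\}$ is precisely designed to cover both creation (small $t$, where $\mathcal{E}_a(0)=m_0$ is all that is finite) and propagation ($t \geq 1$, where $z'=0$ and pure dissipation takes over) within a single argument.

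\textbf{Main obstacle.} The heart of the proof is the bilinear step: one must verify that the Povzner splitting of $\beta k$ into $\beta j$ and $\beta(k-j)$, once weighted by $(a z)^k/k!$ and summed in $k$, genuinely factorises as a Cauchy product of two exponential moments with a $k$-independent constant. This is delicate because $\beta k$ is generally non-integer, so one needs a version of Povzner's lemma valid for fractional moments under only an $L^1$ cut-off, plus a careful treatment of the $j=0$ and $j=k$ boundary terms, which must be absorbed into the dissipation rather than into the bilinear sum. Justifying the interchange of $\frac{d}{dt}$ with the infinite sum defining $\mathcal{E}_a$ is another non-trivial point, likely requiring a truncation at level $N$ followed by passage to the limit using uniform-in-$N$ estimates.
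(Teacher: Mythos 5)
Your overall skeleton — define a time-weighted exponential moment $E(t) = \sum_k m_{\beta k}(t)\,\frac{(a\min\{t,1\})^k}{k!}$, differentiate it, factorize the Povzner gain via a Cauchy product into a product of the exponential moment and a $\beta$-shifted one, and close with a barrier argument — is exactly the structure the paper uses, and you correctly anticipate that the infinite sum must be handled via truncation at level $n$ and a uniform-in-$n$ bound.

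However, there is a genuine gap at the step you yourself call "the crucial input": the assumption that the moment ODE has a dissipation coefficient $K_k \ge \kappa k$ growing linearly in $k$. This is false under the hypotheses of the theorem. With the $L^1$ cut-off, the sharp Povzner lemma gives
\[
\frac{d}{dt} m_{sp} \leq \gamma_{sp/2}\!\!\int\!\!\int f f_*\Big[(|v|^2+|v_*|^2)^{sp/2}-|v|^{sp}-|v_*|^{sp}\Big]|v-v_*|^\beta
- 2(1-\gamma_{sp/2})\!\!\int\!\!\int f f_*\,|v|^{sp}|v-v_*|^\beta,
\]
so the loss coefficient is $2(1-\gamma_{sp/2})$, which is \emph{bounded} (it increases to $2$ as $p\to\infty$, never grows linearly). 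The actual engine of the proof is that the \emph{gain} coefficient $\gamma_{sp/2}\to 0$ as $p\to\infty$; by choosing $p_0$ large one makes the factored gain term $4\gamma_{sp_0/2}\,E^n\,I^n$ a small fraction of the constant-coefficient dissipation $-K_1\,I^n$. Your proposal attributes the decisive mechanism to the wrong term.

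This is not merely a misattribution: the argument as you have written it would not close. After reindexing, the $z'$-contribution and the true dissipation both produce multiples of the same shifted moment $I^n_{\beta,\beta}$ \emph{without} an extra factor of $z$; the combined coefficient is $a z'(t) - K_1$, which is $\le -K_1/2$ for $a$ small, uniformly including $t\to 0^+$. In your version the dissipation appears as $-\kappa a z(t)\widetilde{\mathcal{E}}_a$, which vanishes as $z\to 0$, so near $t=0$ the bracket $a z' - \kappa a z + 2C\mathcal{E}_a$ is strictly positive and the barrier argument fails precisely at the times where creation has to start. (There is also an off-by-one in the moment index produced by the reindexing: pairing $K_k\ge\kappa k$ with $(az)^k/k!$ shifts the moment index to $\beta k + 2\beta$, not $\beta k + \beta$, so it does not reproduce $\widetilde{\mathcal{E}}_a$.) To repair the proof you need (i) the uniform (not growing) loss coefficient $K_1$, (ii) a Povzner estimate yielding $\gamma_{p}\to 0$ so that the factored gain $\gamma_{sp_0/2}E^n I^n$ can be made smaller than $K_1 I^n/4$ once $p_0$ is chosen large, and (iii) the crude a priori polynomial moment bound $m_{sp}(t)\le C_{sp}\max\{1,t^{-sp/\beta}\}$ to control the remainder $R(t)$ coming from $p<p_0$, which also requires the $(az)^p/p!$ weight to absorb the singularity at $t=0$.
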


  We remark that the existence and uniqueness of energy-conserving
  solutions with initial data $f^0 \in L^1(1+\abs{v}^2)$ was proved in
  \cite{MR1697562}.

  As mentioned above, our approach also provides a new proof of the
  property of \emph{propagation} of exponential moments
  \cite{GPV09,BGP04}. This is stated in the following theorem:

\begin{thm}[Propagation of exponential moments]
  \label{thm:exp-moment-propag}
  Let $f$ be an energy-conserving solution to the homogeneous
  Boltzmann equation \eqref{eq:Boltzmann} on $[0,+\infty)$ with
  initial data $f^0 \in L^1(1+\abs{v}^2)$, and assume \eqref{eq:B} and
  \eqref{eq:b-Lq} with $\beta \in (0,2]$. Assume moreover that the
  initial data satisfies for some $s\in [\beta, 2]$
  \begin{equation}
    \label{eq:hyp-exp-init}
    \int_{\R^d} f_0(v) \exp \big( a_0 \abs{v}^s \big) \dv \leq C_0.
  \end{equation}
  Then there are some constants $C, a > 0$ (which depend only on $b$,
  $\beta$ and the initial mass, energy and $a_0$, $C_0$ in
  \eqref{eq:hyp-exp-init}) such that
  \begin{equation*}
    \label{eq:exp-propagation-clean}
    \int_{\R^d} f(t,v) \,
    \exp \big( a \abs{v}^s \big) \dv \leq    C
    \quad
    \text{ for } t \geq 0.
  \end{equation*}
\end{thm}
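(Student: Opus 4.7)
The plan is to close a single differential inequality for the partial exponential moment, adapting the method advertised for Theorem~\ref{thm:exp-moment-creation} but with a time-independent exponential rate $a$, since the initial weight is already finite. Fix $a \in (0, a_0]$ to be chosen and set
\[
E_n(t) := \sum_{k=0}^{n} \frac{a^k}{k!}\, m_{sk}(t),
\]
so that by monotone convergence it suffices to show $\sup_{n,t} E_n(t) < \infty$; note that $E_n(0) \leq C_0$ by \eqref{eq:hyp-exp-init} whenever $a \leq a_0$. Differentiating in $t$ and applying a Povzner-type moment inequality of the form
\[
\frac{d}{dt} m_p \leq -K_0\, m_{p+\beta} + K_1 \sum_{1 \leq j \leq p/2} \binom{p}{j} \big( m_{j+\beta}\, m_{p-j} + m_j\, m_{p-j+\beta} \big)
\]
(valid for $p$ above a small threshold, with the binomial coefficient read in the $\Gamma$ sense) with $p=sk$, then multiplying by $a^k/k!$ and summing, yields
\[
\frac{d}{dt} E_n \leq -K_0\, \mathcal{G}_n + K_1\, \mathcal{B}_n,
\]
where $\mathcal{G}_n := \sum_k (a^k/k!)\, m_{sk+\beta}$ and $\mathcal{B}_n$ is the bilinear double sum.

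Next I would bound the gain term from below and the bilinear term from above, both in terms of $E_n$. For $\mathcal{G}_n$, the elementary splitting $m_{sk+\beta} \geq R^\beta m_{sk} - R^{sk+\beta} m_0$ at a radius $R$ with $a R^s$ small enough yields $\mathcal{G}_n \geq R^\beta E_n - C(R)$. For $\mathcal{B}_n$, the combinatorial factors $\binom{sk}{j}$ combined with the reweighting $a^k/k!$ should be rearranged, after interpolating $m_{j+\beta}$ between moments of the form $m_{s\ell}$ using log-convexity of $p \mapsto m_p$, into a convolution of two exponential moments, producing a bound $\mathcal{B}_n \leq C\, E_n \cdot \widetilde E_n$, where $\widetilde E_n$ is a controlled exponential moment with an extra $|v|^\beta$ factor. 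Choosing $a$ small enough that $K_1 \widetilde E_n$ is absorbed into half of $K_0 R^\beta E_n$ produces the closed inequality $\tfrac{d}{dt} E_n \leq C_1 - c\, E_n$, and Grönwall then gives $E_n(t) \leq \max(C_0, C_1/c)$ uniformly in $n$ and $t$.

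The main obstacle is the combinatorial-analytic control of $\mathcal{B}_n$: when $s \in (\beta, 2]$ the indices $j+\beta$ and $sk-j+\beta$ in the Povzner bilinear term do not sit on the grid $\{s\ell : \ell \in \mathbb{N}\}$ underlying $E_n$, so a direct Cauchy-product identity is unavailable. I expect to handle this by using log-convexity of $p \mapsto m_p$ to interpolate $m_{j+\beta}$ between the two neighbouring moments $m_{s\ell}, m_{s(\ell+1)}$, together with a modified binomial identity of the sort developed in the proof of Theorem~\ref{thm:exp-moment-creation}; the hypothesis $s \in [\beta, 2]$ enters quantitatively here, both to ensure $\beta/s \leq 1$ and that the interpolation weights stay bounded. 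A secondary technical point is to verify that the smallness requirement on $a$ coming from the absorption step is compatible with the constraint $a \leq a_0$, which is secured simply by shrinking $a$ further (the propagation statement only asks for \emph{some} rate $a$, not $a = a_0$).
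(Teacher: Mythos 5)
Your overall strategy (close a single differential inequality for the truncated sum $\sum_k (a^k/k!)\, m_{sk}$) is exactly the paper's, but there is a genuine gap precisely at the point you flag as the main obstacle, and your proposed repair is not what the paper does and would likely not close.

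The issue is the form of the Povzner inequality you wrote down. You take $p = sk$ (noninteger) and expand with $\binom{p}{j}$ for integer $j$, producing moments $m_{j+\beta}$ and $m_{p-j}$ that fall off the grid $\{s\ell\}$. The paper's Lemma~\ref{lem:estim-poly} avoids this entirely: since $s/2 \le 1$, one has $(|v|^2+|v_*|^2)^{sp/2} \le (|v|^s+|v_*|^s)^p$, and then one applies the binomial expansion (Lemma~\ref{lem:app1}) to $(|v|^s+|v_*|^s)^p$ with \emph{integer} $p$ and \emph{integer} $k$. Combined with $|v-v_*|^\beta \le 2|v|^\beta + 2|v_*|^\beta$, this yields the term $S_{s,p}$ of \eqref{eq:def-S_p}, in which every moment is of the form $m_{s\ell}$ or $m_{s\ell+\beta}$ with integer $\ell$ --- all on the two grids needed. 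Because the binomial coefficient is the genuine integer one, $\binom{p}{k}/p! = 1/(k!(p-k)!)$, and Lemma~\ref{lem:convolution} is then a literal discrete Cauchy product: $\sum_{p\ge p_0}\frac{z^p}{p!}S_{s,p} \le 2\,E^n_s\, I^n_{s,\beta}$. No log-convexity interpolation is needed, and indeed an interpolation of $m_{j+\beta}$ between $m_{s\ell}$ and $m_{s(\ell+1)}$ would produce fractional powers $m_{s\ell}^{\theta}m_{s(\ell+1)}^{1-\theta}$ that do not re-sum into a Cauchy product --- so that route is far from obviously closeable.

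Two secondary remarks. First, you omitted the extra term $+K_2\, m_{sp}$ which is present in \eqref{eq:di1} for general $\beta\in(0,2]$ (it comes from the elementary lower bound $|v-v_*|^\beta \ge 2^{1-\beta}|v|^\beta - |v_*|^\beta$); the paper absorbs it by the additional restriction $a < K_1/(4K_2)$. Only for $\beta\in(0,1]$ can one take $K_2=0$, via Lemma~\ref{lem:app2}. Second, your lower bound $\mathcal G_n \ge R^\beta E_n - C(R)$ is a workable variant of what the paper uses ($I^n_{s,\beta} \ge (E^n_s - m_0)/a - e^a$); this part is fine.
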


We give in Section \ref{sec:direct} a novel argument for proving these
results which is based on a differential inequality for the
exponential moment itself, and the exploitation of a discrete
convolution-type estimate for the exponential moment of the gain part
of the collision operator. This avoids the intricate combination of
induction and maximum principle arguments in the previous proofs of
propagation \cite{GPV09,BGP04} and appearance
\cite{MR2264623,citeulike:2703083} of exponential moments. It also
clarifies the structure underlying these induction arguments. The
starting point of both these previous works and our approach is the
creation and propagation of polynomial moments in
\cite{MR1233644,citeulike:2703167} 
and the Povzner inequalities proved in
\cite{BGP04,AL2010}. We include a short appendix which gathers some of
the classical technical results used along the proofs.

\mk

\section{Refresher on the sharp Povzner Lemma}
\label{sec:prelim}

The following lemma reflects the angular averaging property of the
spherical integral acting on positive convex test functions evaluated
at the postcollisional velocities.  These estimates are crucial to be
able to control in a sharp form the moments of the gain operator by
estimates for lower bounds of the loss operator.  They were originally
introduced in \cite[Corollary 1]{BGP04} and further
developed in \cite[Lemma 3 and 4]{GPV09} and more recently
in \cite[Lemma 2.6]{AL2010}. We summarize these results as follows:

\begin{lem}[Sharp Povzner (angular averaging) Lemma]
  \label{lem:Povzner}
  Assume that $b: (-1,1) \to [0,\infty)$ satisfies \eqref{eq:b-Lq},
  and impose without loss of generality the following normalization
  condition
  \begin{equation}
    \label{eq:normalization}
    \int_{-1}^1 b(z) (1-z^2)^{\frac{d-3}{2}}\dd z
    = \frac{1}{|{\mathbb S}^{d-2}|},
  \end{equation}
  where $|{\mathbb S}^{d-2}|$ is the area of the $(d-2)$-dimensional
  unit sphere. Then for $p \geq 1$ it holds that
  \begin{equation}
    \label{eq:povzner}
    \int_{{\mathbb S}^{d-1}} \left(
      \abs{v'}^{2p} + \abs{v_*'}^{2p}
    \right)
    b(\cos \theta) \dd \sigma
    \leq
    \gamma_p
    \left(\abs{v}^2 + \abs{v_*}^2\right)^p
  \end{equation}
  where $\gamma_p > 0$ are constants such that $\gamma_1 = 1$, $p
  \mapsto \gamma_p$ is strictly decreasing and tends to $0$ as $p \to
  \infty$.
\end{lem}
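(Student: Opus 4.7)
The plan is to reduce \eqref{eq:povzner} to a scalar estimate on $[0,1]$ by introducing center-of-mass coordinates, and then to extract the three asserted properties of $\gamma_p$ from a combination of superadditivity, pointwise monotonicity, and a compactness/Dini argument.

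Setting $U := (v+v_*)/2$ and $u := v-v_*$, the identities $|U|^2 + |u|^2/4 = (|v|^2+|v_*|^2)/2 =: E/2$, $v' = U + (|u|/2)\sigma$, and $v'_* = U - (|u|/2)\sigma$ yield
\begin{equation*}
  |v'|^2 = \tfrac{E}{2}(1+\tau(\sigma)), \qquad |v_*'|^2 = \tfrac{E}{2}(1-\tau(\sigma)), \qquad \tau(\sigma) := \frac{2|u|\,U\cdot\sigma}{E},
\end{equation*}
with $|\tau(\sigma)|\leq 1$ by AM--GM applied to $|u|/2$ and $|U|$. Letting $s(\sigma) := (1+\tau(\sigma))/2 \in [0,1]$ and factoring out $E^p$, inequality \eqref{eq:povzner} reduces to
\begin{equation*}
  \int_{\mathbb{S}^{d-1}} \bigl[\,s(\sigma)^p + (1-s(\sigma))^p\,\bigr] b(\cos\theta) \dd \sigma \leq \gamma_p.
\end{equation*}
The normalization \eqref{eq:normalization}, rewritten in spherical coordinates around $\hat u := u/|u|$, gives $\int_{\mathbb{S}^{d-1}} b(\cos\theta) \dd \sigma = 1$, and the superadditivity of $x\mapsto x^p$ on $[0,\infty)$ for $p\geq 1$ gives $s^p + (1-s)^p \leq 1$ on $[0,1]$, hence $\gamma_p \leq 1$ with equality $\gamma_1 = 1$.

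To handle the dependence on $(v,v_*)$ explicitly, I would parametrize $\sigma = \cos\theta\,\hat u + \sin\theta\,\omega$ with $\omega \in \mathbb{S}^{d-2}(\hat u^\perp)$ and decompose $U = \alpha\hat u + U_\perp$, so that $\tau(\sigma) = A\cos\theta + B\sin\theta\,(\omega\cdot\hat e)$ with $\hat e := U_\perp/|U_\perp|$ (chosen arbitrarily if $U_\perp = 0$), $A := 2|u|\alpha/E$, and $B := 2|u||U_\perp|/E$. The constraint $|U|^2 + |u|^2/4 = E/2$ combined with AM--GM forces $A^2 + B^2 = 4|u|^2|U|^2/E^2 \leq 1$, so the integral depends on $(v, v_*)$ only through $(A,B)$ in the compact set $\mathcal{D} := \{(A,B)\in\R^2 : A^2+B^2 \leq 1,\; B \geq 0\}$. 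Denoting this integral by $g_p(A,B)$, each $g_p$ is continuous on $\mathcal{D}$ by dominated convergence; and since $p \mapsto s^p + (1-s)^p$ is strictly decreasing for every $s \in (0,1)$ and $\{\sigma : s(\sigma)\in(0,1)\}$ has full measure in $\mathbb{S}^{d-1}$, the map $p \mapsto g_p(A,B)$ is strictly decreasing for each $(A,B) \in \mathcal{D}$. Defining $\gamma_p := \sup_{\mathcal{D}} g_p$ and invoking attainment of the supremum on the compact set $\mathcal{D}$ then yields the asserted strict monotonicity.

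The main obstacle is the uniform convergence $\gamma_p \to 0$. For each fixed $(A,B)\in\mathcal{D}$, dominated convergence gives $g_p(A,B) \to 0$, since the integrand is bounded by $1$ and tends to $0$ pointwise on the full-measure set $\{0 < s(\sigma) < 1\}$, while the measure $b(\cos\theta)\dd\sigma$ is finite by \eqref{eq:b-Lq}. Combining this pointwise convergence with the continuity of each $g_p$, the compactness of $\mathcal{D}$, and the monotonicity $g_{p+1} \leq g_p$, Dini's theorem upgrades the convergence to uniform convergence on $\mathcal{D}$, yielding $\gamma_p \to 0$.
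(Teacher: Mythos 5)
The paper states this lemma as a refresher and cites its proof to [BGP04, Cor.~1], [GPV09, Lem.~3--4], and [AL2010, Lem.~2.6], so there is no in-paper proof to compare against; judged on its own, your argument is correct. The center-of-mass reduction to $\gamma_p = \sup_{\mathcal D}\int_{\mathbb S^{d-1}}[s^p+(1-s)^p]\,b(\cos\theta)\,d\sigma$, the use of the normalization to get $\int_{\mathbb S^{d-1}} b\,d\sigma = 1$ and hence $\gamma_1=1$ and $\gamma_p\le 1$ from $s^p+(1-s)^p\le 1$, the observation that the dependence on $(v,v_*)$ factors through $(A,B)$ in the compact half-disk $\mathcal D$, and the Dini argument for $\gamma_p\to 0$ all hold up. Two details are worth spelling out. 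First, strict monotonicity of $p\mapsto\gamma_p$ requires evaluating at a maximizer $(A^*,B^*)$ of $g_{p'}$ for the larger exponent $p'>p$, so that $\gamma_{p'}=g_{p'}(A^*,B^*)<g_p(A^*,B^*)\le\gamma_p$; the other direction of the comparison does not immediately yield the inequality between the suprema. Second, the ``full measure'' claim for $\{0<s(\sigma)<1\}$ is automatic when $A^2+B^2<1$, but on the boundary $A^2+B^2=1$ (attained, e.g.\ when $v_*=0$) you need to observe that $\{\tau(\sigma)=\pm 1\}$ is a lower-dimensional subset of $\mathbb S^{d-1}$ and hence $b\,d\sigma$-null because $b\in L^1$ makes $b\,d\sigma$ absolutely continuous with respect to surface measure.

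The genuine difference from the cited references lies in quantitativeness. Those works identify the extremal geometry explicitly (which in your parametrization corresponds to taking the supremum at a boundary point of $\mathcal D$), leading to the closed-form bound \eqref{gammap} when the even part of $b$ is monotone, and to a polynomial decay rate $\gamma_p\le C p^{-1/q'}$ when $b\in L^q$ with $q>1$; those rates are what drive the classical induction proofs of exponential moment bounds. Your compactness/Dini route is non-constructive and provides no rate, but it does cleanly establish all three qualitative properties of $\gamma_p$ for arbitrary $b\in L^1$, which is precisely the level of generality the present paper works at.
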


\begin{rem}
  In the case when the symmetrization $z \mapsto b(z) + b(-z)$ of $b$
  is nondecreasing in $[0,1]$, these constants are controlled by
\begin{equation}
\label{gammap}
 \gamma_p  \le  \frac{1}{|{\mathbb S}^{d-2}|} 
\int_{-1}^{1} b(z) \left(\frac{1+z}2\right)^p (1-z^2)^{\frac{d-3}{2}}\dd z \, .
\end{equation}
\end{rem}

\begin{rem}
  In addition, when $b \in L^q([-1,1], (1-z^2)^{(d-3)/2}\,dz)$ with $q
  >1$, the decay of $\gamma_p$ can be estimated and shown to be
  polynomial: there exists a constant $C > 0$ such that
  \begin{equation*}
    \label{eq:gamma_p}
    \gamma_p \leq \min\left\{ 1, \frac{C}{p^{1/q'}} \right\}
    \qquad (p > 1),
  \end{equation*}
  with $q'$ the H\" older dual of $q$ (i.e., $1/q + 1/q' =
  1$). Furthermore, in the case $q = +\infty$, that is, for $b$
  bounded, it holds that
  \begin{equation*}
    \label{eq:gamma_p-infty}
    \gamma_p \leq \min\left\{ 1, \frac{16 \pi b^*}{p+1} \right\}
    \qquad (p > 1),
  \end{equation*}
  with $b^* := \max_{-1 \leq z \leq 1} b(z)$.
\end{rem}

Let us now state the key a priori estimate on the polynomial moments,
which shall be used in the sequel. For later reference, we define the
following quantity for any $s, p > 0$:
\begin{equation}
  \label{eq:def-S_p}
  S_{s,p} = S_{s,p}(t) := \sum_{k=1}^{k_p} {p \choose k}
  \left(
    m_{sk + \beta} \, m_{s(p -k)}
    + m_{sk} \, m_{s(p -k) + \beta}
  \right),
\end{equation}
with $k_p$ the integer part of $(p+1)/2$.

\begin{lem}[A priori estimate on the polynomial moments]
  \label{lem:estim-poly}
  For $s \in (0,2]$ and $p_0 > 2/s$, the following a priori inequality
  is true whenever all the terms make sense:
  \begin{equation}
    \label{eq:di1}
    \td{}{t} m_{sp}
    \leq
    2 \gamma_{sp/2} S_{s,p}
    - K_1 m_{sp+\beta} + K_2  m_{sp}
    \quad \text{ for } t \geq 0, \ p \ge p_0 > \frac{2}{s},
  \end{equation}
  with $S_{s,p}$ given by \eqref{eq:def-S_p} and constants 
  \begin{equation}
    \label{eq:di0.13}
    K_1 :=
    2 (1-\gamma_{sp_0/2}) C_\beta m_0  \quad \mbox{ and
      } \quad  K_2= 2 \, m_\beta
  \end{equation}
  with $C_\beta := \min\{1, 2^{1-\beta}\}$. 

Alternatively in the case $\beta \in (0,1]$, it is possible to get rid
of the second constant, and obtain
  \begin{equation} 
    \label{eq:di0.13.2} 
 K_1 :=
    2 (1-\gamma_{sp_0/2}) \bar C_\beta m_0  \quad \mbox{ and
      } \quad  K_2= 0
  \end{equation}
  for some constant $\bar C_\beta$ depending on $\beta$ and the
  initial data.

  In both cases, the constant $\gamma_{sp_0/2}$ depends 
  on the integrability of the
  angular function $b$ and on $p_0 > 2/s$.
\end{lem}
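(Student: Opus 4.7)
The plan is to read off the evolution of $m_{sp}$ from the weak form of the Boltzmann equation applied to the test function $\varphi(v)=|v|^{sp}$, and then to combine Lemma \ref{lem:Povzner} with elementary binomial and convexity inequalities in order to isolate on one side the quantity $S_{s,p}$ and on the other side a strictly coercive loss term. Concretely, I would start from the symmetrised identity
\begin{equation*}
  \td{}{t}m_{sp}
  = \tfrac{1}{2}\int_{\R^d\times\R^d\times{\mathbb S}^{d-1}}
     |v-v_*|^{\beta}\, b(\cos\theta)\, \Delta\, f f_*\,\dd\sigma\,\dv_*\,\dv,
  \qquad
  \Delta := |v'|^{sp}+|v_*'|^{sp}-|v|^{sp}-|v_*|^{sp},
\end{equation*}
perform the $\sigma$-integration first, and invoke \eqref{eq:povzner} at exponent $sp/2\ge sp_0/2>1$ together with \eqref{eq:normalization}. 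The resulting pointwise bound splits algebraically as
\begin{equation*}
  \gamma_{sp/2}\bigl[(|v|^2+|v_*|^2)^{sp/2}-|v|^{sp}-|v_*|^{sp}\bigr]
  -(1-\gamma_{sp/2})\bigl(|v|^{sp}+|v_*|^{sp}\bigr),
\end{equation*}
cleanly separating a \emph{gain excess} from a pure \emph{loss}.

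For the gain excess I would use the interpolation $(|v|^2+|v_*|^2)^{sp/2}\le(|v|^s+|v_*|^s)^p$, valid since $s\le 2$ by monotonicity of $\ell^r$-norms in $r$, followed by a binomial-type inequality
\begin{equation*}
  (|v|^s+|v_*|^s)^p - |v|^{sp}-|v_*|^{sp}
  \le \sum_{k=1}^{k_p}\binom{p}{k}\bigl(|v|^{sk}|v_*|^{s(p-k)}+|v|^{s(p-k)}|v_*|^{sk}\bigr),
\end{equation*}
known for general real $p\ge 1$ (see \cite{BGP04,GPV09,AL2010}). Multiplying by $|v-v_*|^\beta\le C'_\beta(|v|^\beta+|v_*|^\beta)$, with $C'_\beta=\max\{1,2^{\beta-1}\}$ for $\beta\in(0,2]$, and integrating against $ff_*$ recovers exactly $S_{s,p}$ as in \eqref{eq:def-S_p}, up to the claimed factor $2\gamma_{sp/2}$.

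For the loss piece I would symmetrise $v\leftrightarrow v_*$ and use the elementary inequality $|v-v_*|^\beta\ge C_\beta|v|^\beta-|v_*|^\beta$ with $C_\beta=\min\{1,2^{1-\beta}\}$, which gives
\begin{equation*}
  \int_{\R^d}f(v_*)\,|v-v_*|^\beta \dv_*\ge C_\beta\, m_0\,|v|^\beta\,-\,m_\beta.
\end{equation*}
Integrating against $f(v)|v|^{sp}$ produces the coercive $-C_\beta m_0\, m_{sp+\beta}$ together with a slack $+m_\beta\, m_{sp}$; combining with the uniform lower bound $1-\gamma_{sp/2}\ge 1-\gamma_{sp_0/2}>0$ provided by the strict monotonicity of $\gamma_p$ and by $p\ge p_0$, and tracking the factors of $2$ from the symmetrisation, yields precisely $-K_1 m_{sp+\beta}+K_2 m_{sp}$ as in \eqref{eq:di0.13}. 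To reach the sharper \eqref{eq:di0.13.2} in the regime $\beta\in(0,1]$, I would replace the crude lower bound above by a truncation: choose $R$ depending only on the mass and energy so that $\int_{|v_*|\le R}f_*\dv_*\ge m_0/2$; for $|v|\ge 2R$ this yields $\int f_*|v-v_*|^\beta\dv_*\ge \bar C_\beta m_0|v|^\beta$ directly, while the contribution from the range $|v|\le 2R$ is a bounded correction that can be absorbed into the $S_{s,p}$ term (or into the coercive term) at the price of enlarging $\bar C_\beta$ in a data-dependent way, using that $\beta\le 1$ keeps the $m_\beta$-type slack strictly subdominant to $|v|^\beta m_0$.

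The main obstacle I expect is the binomial-type inequality above for \emph{non-integer} $p$: it is not a literal expansion, and its justification requires a convexity argument of the kind given in \cite{BGP04,GPV09,AL2010}, with particular care for the truncation index $k_p=\lfloor(p+1)/2\rfloor$ and for the middle term when $p$ happens to be even. Once that ingredient is in hand, the remaining work is essentially bookkeeping of symmetrisation factors and the key observation that monotonicity of $p\mapsto\gamma_p$ is exactly what makes the coefficient $(1-\gamma_{sp/2})$ of the coercive term uniform in $p\ge p_0$.
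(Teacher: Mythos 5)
Your derivation of the general case \eqref{eq:di0.13} follows the paper's route essentially verbatim: the same Povzner splitting into a gain excess and a loss, the interpolation $(|v|^2+|v_*|^2)^{sp/2}\le(|v|^s+|v_*|^s)^p$, the binomial-type inequality (which is exactly Lemma~\ref{lem:app1}, i.e.\ Lemma~2 of \cite{BGP04}, valid for real $p>1$ with $k_p=\lfloor (p+1)/2\rfloor$ --- so the ``main obstacle'' you flag is already supplied there), the bound $|v-v_*|^\beta\lesssim |v|^\beta+|v_*|^\beta$ to produce $S_{s,p}$, the reverse inequality $|v-v_*|^\beta\ge 2^{1-\beta}|v|^\beta-|v_*|^\beta$ for the loss, and monotonicity of $\gamma_p$ to make the coefficient uniform for $p\ge p_0$. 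Up to factor-of-two bookkeeping in the symmetrisation (harmless), this part is correct and identical in spirit to the paper.

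The gap is in your treatment of the refined case $\beta\in(0,1]$, i.e.\ \eqref{eq:di0.13.2} with $K_2=0$. Your truncation argument gives the pointwise bound $\int f_*|v-v_*|^\beta\dd v_*\ge \bar C_\beta m_0|v|^\beta$ only on the region $|v|\ge 2R$, so after integrating against $f(v)|v|^{sp}$ you obtain at best
\begin{equation*}
  \int_{\R^d\times\R^d} f f_*\,|v|^{sp}|v-v_*|^\beta \dd v\dd v_*
  \;\ge\; \bar C_\beta m_0\, m_{sp+\beta} \;-\; \bar C_\beta m_0^2\,(2R)^{sp+\beta},
\end{equation*}
and the leftover $(2R)^{sp+\beta}m_0^2$ grows geometrically in $p$. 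It cannot be absorbed into $2\gamma_{sp/2}S_{s,p}$, into $-K_1 m_{sp+\beta}$, or into a term $K_2 m_{sp}$ with constants independent of $p$: by Jensen, $m_{sp+\beta}$ and $m_{sp}$ are only bounded below by $m_0(m_2/m_0)^{(sp+\beta)/2}$ and $m_0(m_2/m_0)^{sp/2}$ respectively, while $2R\sim\sqrt{m_2/m_0}$, so the ratio of the error to any of these terms blows up like $C^{sp}$ (and $\gamma_{sp/2}\to 0$ makes absorption into the gain term even worse). Since uniformity of $K_1,K_2$ in $p\ge p_0$ is precisely what the lemma asserts (and what the summation over $p$ in Section~\ref{sec:direct} requires), the statement \eqref{eq:di0.13.2} is not proved by this argument. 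The deeper point is that a bound of the form $\int f(t,v_*)|v-v_*|^\beta\dd v_*\ge c\,(1+|v|^\beta)$ for \emph{all} $v$, uniformly in time, cannot follow from mass and energy alone (a near-concentration of $f(t,\cdot)$ at a point makes the constant degenerate near that point); the paper instead invokes the dynamical lower bound of Lemma~\ref{lem:app2} (Lemma~2 of \cite{GPV09}), which propagates the lower bound $\int f_0(v_*)|v-v_*|^\beta\dd v_*\ge C(1+|v|^\beta)$ of the \emph{initial datum} along the flow --- this is exactly why the paper remarks that $\bar C_\beta$ depends on $f_0$ in a non-trivial way, not merely through its mass and energy, and why the restriction $\beta\le 1$ appears there. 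To repair your proof of \eqref{eq:di0.13.2} you should replace the static truncation by this (or an equivalent) solution-dependent lower bound.
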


\begin{proof} 
  Using Lemma \ref{lem:Povzner} one obtains for any $p
  \geq 2/s$:
  \begin{multline}
    \label{eq:di0}
    \td{}{t} m_{sp}
    \\
    \leq \gamma_{sp/2} \int_{\R^d \times \R^d} f
    f_* \left( \left( |v|^2 + |v_*|^2 \right)^{\frac{s p}{2}} - |v|^{s
        p} - |v_*|^{s p} \right) |v-v_*|^\beta \dd v \dd v_*
    \\
    - 2 (1- \gamma_{sp/2}) \int_{\R^d \times \R^d} f f_* |v|^{s p}
    |v-v_*|^\beta \dd v \dd v_*.
  \end{multline}
  In order to estimate the right hand side of \eqref{eq:di0} we first
  focus on an upper bound for its positive term.  Since $0 < s/2 \leq
  1$, then
  \[
  \left( |v|^2 + |v_*|^2 \right)^{\frac{s p}{2}} \leq \left(
    |v|^s + |v_*|^s \right)^p \, .
  \]
  Hence, using Lemma~\ref{lem:app1} in the Appendix (a classical
  result taken from \cite[Lemma 2]{BGP04}) and the estimate
  $|v-v_*|^\beta \leq 2 |v|^\beta + 2 |v_*|^\beta$ we obtain that, for
  any $p \geq 1$, the first integral in \eqref{eq:di0} is controlled
  by
  \begin{multline}
    \label{eq:di0.11}
    \gamma_{sp/2} \int_{\R^d \times \R^d} f
    f_* \left( \left( |v|^2 + |v_*|^2 \right)^{\frac{s p}{2}} - |v|^{s
        p} - |v_*|^{s p} \right) |v-v_*|^\beta \dd v \dd v_*
    \\
    \leq
    2 \gamma_{sp/2} S_{s,p}.
 \end{multline}

 The estimate of the negative term in \eqref{eq:di0} requires a
 control from below.
 When $\beta\in(0,1]$
 it follows from Lemma~\ref{lem:app2} in the Appendix (taken from
 \cite[Lemma 2]{GPV09}) that the lower bound for the negative term in
 \eqref{eq:di0} satisfies
 \begin{equation} \label{eq:di0.12}
  2 (1- \gamma_{sp/2}) \int_{\R^d \times \R^d} f f_* |v|^{s p}
    |v-v_*|^\beta \dd v \dd v_* \geq \  2 \, \bar C_\beta \, (1-\gamma_{sp/2}) 
    m_0 m_{sp+\beta} 
 \end{equation}
 for some constant $\bar C_\beta$ related to $\beta$ and the initial
 data.  So estimate \eqref{eq:di1} follows with $K_1$ and $K_2$ as in
 \eqref{eq:di0.13.2}.

 In the general case $\beta\in(0,2]$, the previous argument does not
 necessarily follow, yet it is still possible to obtain an easier
 lower bound that still allows for the control of moments and their
 summability. We use the fact that $|v-v_*|^\beta \geq 2^{1-\beta}
 |v|^\beta - |v_*|^\beta $ (which can be obtained from the triangle
 inequality and the inequality $(x+y)^\beta \leq C_\beta^{-1}(x^\beta
 + y^\beta)$ for $x,y \geq 0$.) This gives a lower bound for the
 negative term in \eqref{eq:di0}:
\begin{multline}
  \label{eq:di0.12.1}
  2 (1- \gamma_{sp/2}) \int_{\R^d \times \R^d} f f_* |v|^{s p}
  |v-v_*|^\beta \dd v \dd v_*
  \\ 
  \qquad\geq \  2 (1-\gamma_{sp/2}) 
  C_\beta m_0 m_{sp+\beta} - 2 m_\beta m_{sp}.
\end{multline}
Since $\gamma_{sp}$ decreases as $p \to \infty$, it follows that $2
(1-\gamma_{sp/2}) C_\beta m_0 \geq K_1$ for any $p \geq p_0$. Hence,
estimate \eqref{eq:di1} follows with $K_1$ and $K_2$ as in
\eqref{eq:di0.13}.
\end{proof}

\begin{rem} We note that neither in the work \cite{GPV09} nor in here
  the finiteness of the entropy is required, however it was needed in
  the earlier work \cite{citeulike:2703167} in order to obtain lower
  bounds for the negative term in \eqref{eq:di0}. If the solution has
  a finite entropy, then these lower bounds may be obtained by the
  same technique as in \cite{citeulike:2703167}. Observe however that
  the constant $\bar C_\beta$ in the case $\beta \in (0,1]$ with
  $K_2=0$ depends on the initial data in a non-trivial way, through
  the positive constant $C>0$ such that
  \begin{equation*}
    \int_{\R^d} f_0(v_*) \, |v-v_*|^\beta \dd v_* \ge C (1+ |v|^\beta)
  \end{equation*}
  which cannot be expressed simply in terms of the mass and energy of
  $f_0$. Nevertheless the general argument (involving $K_2>0$) does
  provide constants only depending on the initial data through its
  mass and momentum.
\end{rem}

Next, we recall and prove a very similar result to that in
\cite[Theorem 4.2]{citeulike:2703167}. The main difference is that
finiteness of the entropy of the initial condition is not required
here.

\begin{lem}[Creation and propagation of polynomial moments]
  \label{lem:moment-creation}
  Assume \eqref{eq:B} and \eqref{eq:b-Lq} with $0 < \beta \leq 2$. Set
  $s \in (0,2]$, and let $f$ be an energy-conserving solution to the
  homogeneous Boltzmann equation \eqref{eq:Boltzmann} with initial
  data $f_0 \in L^1(1+\abs{v}^2)$. For every $p > 0$ there exists a
  constant $C_{sp} \geq 0$ depending only on $p$, $s$, $b$ and the
  initial mass and energy, such that
  \begin{equation}
    \label{eq:moment-creation}
    m_{sp}(t) \leq C_{sp} \max \{ 1, t^{-sp/\beta} \}
    \quad \text{ for } t >0.
  \end{equation}
  If $m_{sp}(0)$ is finite, then the control can be improved to
  simply
  \begin{equation}
    \label{eq:moment-propag}
    m_{sp}(t) \leq C_{sp} \quad \text{ for } t \ge 0
  \end{equation}
  for some constant $C_{sp}$ depending only on $p$, $s$, $b$, the
  initial mass and energy, and $m_p(0)$.
\end{lem}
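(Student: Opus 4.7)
The plan is to combine the a priori differential inequality of Lemma~\ref{lem:estim-poly} with a Jensen-type interpolation, thereby reducing the problem to a scalar Bernoulli-type ODE whose solutions are both instantly regularized and uniformly bounded in time.

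The case $sp \le 2$ is immediate from mass and energy conservation, since $|v|^{sp}\le 1+|v|^2$ gives $m_{sp}(t)\le m_0(0)+m_2(0)$ for every $t\ge 0$. For $sp>2$ I would proceed by induction on $p$. The key observation is that every index appearing in $S_{s,p}$ is strictly smaller than $sp$ apart from a harmless contribution that can be absorbed into the dissipation; under the induction hypothesis one thus gets $S_{s,p}(t)\le B(t)$, where $B$ is explicit and has the same singularity at $t=0$ that we are trying to prove for $m_{sp}$. When a term in $S_{s,p}$ has an index slightly larger than $sp$ (which can happen for moderate $p$ since the sum involves $m_{s(p-k)+\beta}$), I would split $|v|^{s(p-k)+\beta}\le 1+|v|^{sp+\beta}$ so that the small multiple of $m_{sp+\beta}$ produced this way can be absorbed into the dissipative term $-K_1 m_{sp+\beta}$ on the right-hand side of \eqref{eq:di1}, using that $\gamma_{sp/2}$ can be made as small as desired once the base index $p_0$ in Lemma~\ref{lem:estim-poly} is chosen large enough.

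For the dissipative term, I would apply Jensen's inequality with the probability measure $f(t,v)\,dv/m_0(0)$ (mass conservation gives $m_0(t)\equiv m_0(0)$) and the convex function $x\mapsto x^{1+\beta/(sp)}$ to obtain
\begin{equation*}
 m_{sp+\beta}(t)\ \ge\ m_0(0)^{-\beta/(sp)}\,m_{sp}(t)^{1+\beta/(sp)}.
\end{equation*}
Inserting this bound into \eqref{eq:di1} and writing $y(t)=m_{sp}(t)$ yields a scalar inequality of Bernoulli type,
\begin{equation*}
 y'(t)\ \le\ B(t)+K_2\,y(t)-\kappa\,y(t)^{1+\beta/(sp)},
\end{equation*}
with $\kappa>0$ depending only on $b$, $\beta$, $s$, $p$ and the initial mass and energy. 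A standard ODE comparison argument then yields both conclusions: for the creation estimate, the super-linear damping forces $y(t)\le C\max\{1,t^{-sp/\beta}\}$ regardless of the value of $y(0)$, since the natural self-similar solution of the Bernoulli equation has exactly this rate; for the propagation estimate, a finite $y(0)$ serves as an initial condition and the same dominance argument (the $-\kappa y^{1+\beta/(sp)}$ term dominates $B+K_2 y$ once $y$ is large) produces a uniform-in-time bound.

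The main obstacle is closing the induction for moderate values of $p$, where one must simultaneously absorb the slightly-too-large moments appearing in $S_{s,p}$ and track explicit constants depending only on mass, energy, and $b$. This is exactly where the absorption trick described above, together with the ability to choose $p_0$ large to make $\gamma_{sp_0/2}$ small, plays its role; it is also the reason why Lemma~\ref{lem:estim-poly} is stated with $K_1$ depending on $\gamma_{sp_0/2}$ rather than on $\gamma_{sp/2}$ itself.
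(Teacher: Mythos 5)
Your overall reduction coincides with the paper's: combine the a priori inequality \eqref{eq:di1} of Lemma~\ref{lem:estim-poly} with the lower bound $m_{sp+\beta}\ge K\,m_{sp}^{1+\beta/(sp)}$ (your Jensen step is the paper's H\"older step) to obtain a Bernoulli-type differential inequality, then get \eqref{eq:moment-creation} from a supersolution of order $t^{-sp/\beta}$ and \eqref{eq:moment-propag} from a maximum principle. Where you genuinely differ is the treatment of $S_{s,p}$: the paper does not induct on $p$ at all, but bounds $S_{s,p}\le C\,m_\beta\,m_{sp}$ directly by moment interpolation (with $m_\beta$ controlled by mass and energy since $\beta\le 2$), which yields the \emph{autonomous} inequality $\frac{d}{dt}m_{sp}\le C'm_{sp}-K\,m_{sp}^{1+\beta/(sp)}$ with no time-dependent source; your induction-with-source $B(t)$ is the heavier, classical Wennberg-style route. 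Note also that the paper first carries out all of this on truncated solutions (for which every moment is finite) and then removes the truncation; your comparison argument ``regardless of the value of $y(0)$'' needs such an approximation to be rigorous when $m_{sp}(0)=+\infty$.

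Two steps of your version are wrong as written. First, $B(t)$ does not have the singularity you claim: under the inductive bounds a product such as $m_{sk+\beta}(t)\,m_{s(p-k)}(t)$ is of size $t^{-(sp+\beta)/\beta}=t^{-sp/\beta-1}$ near $t=0$, one power worse than the target rate. This is repairable, because the supersolution $A\,t^{-sp/\beta}$ still closes: the dissipation $-\kappa\,(A t^{-sp/\beta})^{1+\beta/(sp)}$ scales exactly like $t^{-sp/\beta-1}$ and dominates $B$ once $A$ is large; but the claim must be corrected and this scaling checked. Second, and more seriously, your absorption of the indices exceeding $sp$ is not justified: $\gamma_{sp/2}$ is fixed by the moment $p$ you are estimating and cannot be made small by enlarging $p_0$ — indeed \eqref{eq:di1} only holds for $p\ge p_0$, so for the problematic moderate $p$ you cannot take $p_0$ large, and enlarging $p_0$ only changes $K_1$ through $\gamma_{sp_0/2}$ (that dependence is there to make $K_1$ uniform in $p\ge p_0$, not to create smallness). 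Moreover, after splitting $|v|^{q}\le 1+|v|^{sp+\beta}$ the coefficient in front of $m_{sp+\beta}$ is $2\gamma_{sp/2}\binom{p}{k}m_{s(p-k)}(t)$, which is not small and, in the creation regime, blows up as $t\to0$, so it cannot be absorbed into the fixed term $-K_1 m_{sp+\beta}$. A correct fix is to interpolate any such $m_q$ with $q\in(sp,sp+\beta]$ between $m_2$ and $m_{sp+\beta}$ and apply Young's inequality, which yields an arbitrarily small multiple of $m_{sp+\beta}$ plus a lower-order source that can be added to $B(t)$; alternatively, for the range where all indices of $S_{s,p}$ lie in $[\beta,sp]$, the paper's interpolation bound $S_{s,p}\le C\,m_\beta\,m_{sp}$ disposes of the issue without any induction.
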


\begin{proof}
  Following a common procedure (see
  \cite{MR1697562,citeulike:2703167}), the estimates can be carried
  first on a truncated solution (for which all moments are finite and
  our calculations are rigorously justified), and then proved for the
  solution to the full problem by relaxing the truncation parameter.

  Let us prove \eqref{eq:moment-creation}: observe that by H\"older's
  inequality
  \[
  S_{s,p} \le C m_\beta m_{sp}
  \quad \text{ and }
  \quad m_{sp+\beta} \ge K m_{sp} ^{1+\beta/(sp)}
  \]
  for some constants $C$, $K>0$ depending only on $s$, $p$, the initial mass and energy. Since $\beta \le 2$, we have $1 \le
  2/\beta$ and therefore $m_\beta$ is controlled by the mass and
  energy. We deduce that $m_{sp}$ satisfies the differential
  inequality
  \begin{equation}\label{ee1}
  \td{}{t} m_{sp} \le C' m_{sp} - K m_{sp} ^{1+\beta/(sp)}
  \end{equation}
  for some other constant $C'>0$ depending only on $s$, $p$, the initial mass and energy. This readily implies the bound
  \eqref{eq:moment-creation} by computing an upper solution to this
  differential inequality, and the bound \eqref{eq:moment-propag} by a
  maximum principle argument.
\end{proof}

\begin{rem}
  Observe that the polynomial bound $O(t^{-sp/\beta})$ on the
  appearance of $m_p$ is \emph{not} optimal, as can be seen from
  \cite[Theorem~1.1]{MR1697562}. However our rate of appearance of
  exponential moments can be seen to be optimal by inspection of the
  simpler equation $\partial_t f = - C \left(1+|v|^\beta\right) f$
  which provides subsolutions to the Boltzmann
  equation. 
\end{rem}

\section{Proof of the main theorems}
\label{sec:direct}

In this section we give a proof of Theorems
\ref{thm:exp-moment-creation} and \ref{thm:exp-moment-propag} 
valid for any 
integrable cross-section $b$. We first carry out the estimates on a
finite sum of polynomial moments, and then pass to the
limit.

Our goal is to estimate the quantity
\begin{equation*}
  E_s(t,z)
  :=
  \int_{\R^d} f(t,v) \exp\big( z \abs{v}^s \big) \,\dd v
  =
  \sum_{p=0}^\infty m_{sp}(t) \frac{z^p}{p!}
\end{equation*}
where $s=\beta$ and $z=at$ for Theorem~\ref{thm:exp-moment-creation}
and $s \in (0,2]$ and $z=a$ for Theorem~\ref{thm:exp-moment-propag},
for some $a > 0$. For use below let us define the truncated sum as
\begin{equation*}
  E^n _s(t,z)  :=   \sum_{p=0}^n m_{sp}(t) \frac{z^p}{p!}
\end{equation*}
for $n \in \mathbb N$, $z \ge 0$, and $t \geq 0$. We also define
\begin{equation*}
  I^n _{s,\beta}(t,z) := \sum_{p=0}^n m_{sp+\beta}(t) \frac{z^{p}}{p!}.
\end{equation*}

Let us first prove the key lemma, which identifies the discrete
convolution structure.  This result gives a control for finite sums of
the moments associated to the gain operator. It is uniform in
$\beta\in (0,2]$:
\begin{lem}
  \label{lem:convolution}
  Assume $0 < \beta \le s \le 2$. For any $p_0 \ge 2/s$, we have the
  following functional inequality
  \begin{equation}
    \label{eq:convolution}
    \sum_{p=p_0} ^n \frac{z^p}{p!} S_{s,p}(t) \le 2 E^n _s (t,z) I^n _{s,\beta}(t,z)
  \end{equation}
where $S_{s,p}$ was defined in \eqref{eq:def-S_p}. 
\end{lem}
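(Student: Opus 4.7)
The plan is to recognize that the left-hand side is essentially a truncated Cauchy product of the two power series $E^n_s$ and $I^n_{s,\beta}$. Writing
$$a_k := \binom{p}{k}\bigl(m_{sk+\beta}\,m_{s(p-k)} + m_{sk}\,m_{s(p-k)+\beta}\bigr) \ge 0,$$
one has $S_{s,p} = \sum_{k=1}^{k_p} a_k$, which is trivially bounded by the full sum: $S_{s,p} \le \sum_{k=0}^p a_k$ since every term is nonnegative. The latter is symmetric: reindexing the second piece of $a_k$ via $k \mapsto p-k$ and using $\binom{p}{k} = \binom{p}{p-k}$ gives
$$\sum_{k=0}^p a_k = 2 \sum_{k=0}^p \binom{p}{k} m_{sk+\beta}\, m_{s(p-k)},$$
so $S_{s,p} \le 2\sum_{k=0}^p \binom{p}{k} m_{sk+\beta}\, m_{s(p-k)}$.

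Next I would multiply by $z^p/p!$ and use the identities $\binom{p}{k}/p! = 1/\bigl(k!(p-k)!\bigr)$ and $z^p = z^k z^{p-k}$ to write
$$\sum_{p=p_0}^n \frac{z^p}{p!} S_{s,p} \le 2 \sum_{p=p_0}^n \sum_{k=0}^p \frac{z^k m_{sk+\beta}}{k!}\cdot \frac{z^{p-k} m_{s(p-k)}}{(p-k)!}.$$
Substituting $q = p - k$, the double sum is indexed by pairs $(k,q)$ with $k,q \ge 0$ and $p_0 \le k+q \le n$. Since every summand is nonnegative, dropping the constraint $p_0 \le k + q$ and enlarging the index set to the full rectangle $\{0,\ldots,n\}\times\{0,\ldots,n\}$ only increases the sum, and the resulting product factors as $2\,I^n_{s,\beta}(t,z)\,E^n_s(t,z)$, giving the claim.

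There is no serious obstacle: the argument is just the observation that $S_{s,p}$ encodes (at most) the off-diagonal terms of a binomial convolution, together with the standard Cauchy-product identity. The one point worth pausing on is the extension of the diagonal band $\{p_0 \le k+q \le n\}$ to the full square $\{0,\ldots,n\}^2$, which is harmless here only because all moments $m_{sp}(t)$ and all weights $z^p/p!$ are nonnegative; it is precisely this extension that decouples the double sum into the product $I^n_{s,\beta}\cdot E^n_s$ needed to close the differential inequality in the next section.
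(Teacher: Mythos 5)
Your proof is correct and follows essentially the same route as the paper: both rest on the identity $\binom{p}{k}\frac{z^p}{p!}=\frac{z^k}{k!}\frac{z^{p-k}}{(p-k)!}$ and on using nonnegativity of the moments to enlarge the truncated (Cauchy-product) index set so that the double sum factors as $E^n_s\cdot I^n_{s,\beta}$. The only cosmetic difference is that you symmetrize first and bound $S_{s,p}$ by twice a single convolution term, whereas the paper estimates the two terms of $S_{s,p}$ separately, each by $E^n_s I^n_{s,\beta}$.
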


\begin{proof}
  Let us recall the definition of $S_{s,p}$ from \eqref{eq:def-S_p}:
  \begin{equation*}
    S_{s,p}
    := \sum_{k=1}^{k_p} {p \choose k}
    \left( m_{sk + \beta} \, m_{s(p - k)} + m_{sk} \, m_{s(p -k) +
        \beta} \right),
  \end{equation*}
  where $k_p$ is the integer part of $(p+1)/2$. The first part of the
  sum in the left hand side of \eqref{eq:convolution} can be bounded
  as:
\begin{multline*}
  \label{eq:sum-S_p-2}
  \sum_{p=p_0}^n \frac{z^p}{p!} \sum_{k=1}^{k_p} {p \choose k}
  m_{sk + \beta} \, m_{s(p - k)}
  = \sum_{p=p_0}^n \sum_{k=1}^{k_p} m_{sk + \beta}
  \frac{z^{k}}{k!}  \, m_{s(p - k)} \frac{z^{p-k}}{(p-k)!}
  \\
  \leq \sum_{k=1}^n m_{sk + \beta}
  \frac{z^{k}}{k!}  \sum_{p=\max\{p_0, 2k-1\}}^n m_{s(p - k)}
  \frac{z^{p-k}}{(p-k)!}  \leq I^n _{s,\beta} (t,z) E^n _s(t,z).
\end{multline*}
We carry out a similar estimate for the other part:
\begin{multline*}
  \sum_{p=p_0}^n \frac{z^p}{p!} 
  \sum_{k=1}^{k_p} {p \choose k}
  m_{sk} \, m_{s(p-k)  + \beta}
  =
  \sum_{p=p_0}^n 
  \sum_{k=1}^{k_p}
  m_{sk} \frac{z^{k}}{k!}
  \, m_{s(p- k) + \beta} \frac{z^{p-k}}{(p-k)!}
  \\
  \leq
  \sum_{k=1}^n
  m_{sk} \frac{z^{k}}{k!}
  \sum_{p=\max\{p_0, 2k-1\}}^n
  m_{s(p- k)+\beta} \frac{z^{p-k}}{(p-k)!}
  \leq
  E^n_s(t,z) I^n_{s,\beta}(t,z)
\end{multline*}
which concludes the proof.
\end{proof}

\bigskip We now can prove both Theorem~\ref{thm:exp-moment-creation}
and Theorem~\ref{thm:exp-moment-propag}.  We write the proof first for
the case $\beta \in (0,1]$ with the choice of constants
\eqref{eq:di0.13.2} in \eqref{eq:di1} (hence with $K_2=0$). Later we
show the corresponding estimates for the full range $\beta\in (0,2]$
using the choice of constants \eqref{eq:di0.13} in \eqref{eq:di1}.
  
\begin{proof}[Proof of Theorem~\ref{thm:exp-moment-creation}]
  First we notice that it is enough to prove the following (under the
  same assumptions): there are some constants $T, C, a > 0$ (which
  depend only on $b$ and the initial mass and energy) such that
  \begin{equation}
    \label{eq:exp-generation}
    \int_{\R^d} f(t,v) \, \exp \big( a t \abs{v}^\beta \big) \dd v
    \leq
    C
    \quad
    \text{ for } t \in [0, T].
  \end{equation}
  Indeed, since the assumptions of lower and upper bounds on the mass
  and energy are satisfied uniformly in time along the flow, for $t
  \ge T$ it is then possible to apply \eqref{eq:exp-generation}
  starting at time $(t-T)$.

  Hence, we aim at proving the estimate \eqref{eq:exp-generation}. We
  set $s = \beta$. Consider $a >0$ to be fixed later, $n \in \mathbb
  N$ and define $T > 0$ as
\begin{equation*}
  \label{eq:def-T}
  T := \min\Big\{ 1 \ ; \ \sup\big\{t > 0 \ \text{ s.t. } \ E^n_\beta (t,at) < 4 m_0 \big\}\Big\}.
\end{equation*}
The definition is consistent since $E^n _\beta(0,0) = m_0$ and the
Lemma~\ref{lem:moment-creation} ensures that $T>0$ for each given
$n$. The bound of $1$ is not essential, and is included just to ensure
that $T$ is finite. We note that a priori such $T$ depends on the
index $n$ in the sum $E^n_\beta$. However, we will show that $T$ has a lower
bound that depends only on $b$, $\beta$ and the initial mass and
energy, thus proving the theorem. Unless otherwise noted, all
equations below which depend on time are valid for $t \in [0,T]$.

Choose an integer $p_0 > 2/\beta$, to be fixed later. Starting from
Lemma~\ref{lem:estim-poly} (inequality~\eqref{eq:di1}), we have
\begin{equation}
  \label{eq:di4.5}
  \td{}{t} m_{\beta p}
  \leq
  2 \gamma_{\beta p/2} S_{\beta,p}
  - K_1 m_{\beta(p+1)} 
  \quad \text{ for } t \geq 0, \ p \geq p_0,
\end{equation}
with $S_{\beta, p}$ given by \eqref{eq:def-S_p} and $K_1$ defined in 
\eqref{eq:di0.13.2}, independent of $p$ with $p\geq p_0$ as soon as
$p_0$ is strictly bounded away from $2/\beta$. 

In addition, from Lemma \ref{lem:moment-creation}
(inequality~\eqref{eq:moment-creation}) we know that there exists a
constant $C_{p_0} > 0$ (depending on $p_0$) such that
\begin{equation}
  \label{eq:small-moments}
  \sum_{p=0}^{p_0} m_{\beta p}(t)\,t^p \leq C_{p_0}
  \quad \text{ for all } t \in [0,T].
\end{equation}
%
Taking any $a < 1$ and using the product rule,
\begin{multline*}
  \td{}{t} \sum_{p=p_0}^n m_{\beta p} \frac{(at)^p}{p!}
  \\
  \leq \sum_{p=p_0}^n \frac{(at)^p}{p!}  \left( 2
    \gamma_{\beta p/2} S_{\beta,p} - K_1 m_{\beta( p+1)} \right)
+ a
  \sum_{p=p_0}^n m_{\beta p} \, \frac{(at)^{p-1}}{(p-1)!}
  \\
  \leq 2 \sum_{p=p_0}^n \frac{(at)^p}{p!} \gamma_{\beta p/2}
  S_{\beta, p} +
  (a - K_1) I^n _{\beta,\beta}(t,at) 
+ (K_1 +a)
  \sum_{p=1}^{p_0} m_{\beta p}
  \,\frac{(at)^{p-1}}{(p-1)!}
  \\
  \leq 2 \sum_{p=p_0}^n \frac{(at)^p}{p!} \gamma_{\beta p/2} S_{\beta,p} +
  (a- K_1) I^n_{\beta,\beta}(t,at) 
+ \frac1t (K_1+a) C_{p_0},
\end{multline*}
where we have used $a < 1$ and \eqref{eq:small-moments} in
the last step. Hence, from Lemma~\ref{lem:convolution}
(inequality~\eqref{eq:convolution}) we obtain
\begin{equation*}
  \td{}{t} \sum_{p=p_0}^n m_{\beta p} \frac{(at)^p}{p!}
   \leq I^n_{\beta,\beta}(t,at) \Big[
    4 \gamma_{\beta p_0/2} E^n_\beta(t,at) + (a- K_1)
  \Big] 
+ \frac1t (K_1+a) C_{p_0}.
 \end{equation*}
 Next, choose $p_0$ large enough such that $16 \gamma_{\beta p_0/2}
 m_0 \leq (1/4) K_1$ (or equivalently, by using the definition of
 $K_1$ in \eqref{eq:di0.13}, $\gamma_{\beta p_0/2} < (32+\bar
 C_\beta)^{-1}$) and restrict further the parameter $a$, so that $a
 \leq K_1/2$. Then, as $E^n_\beta(t,at) \leq 4m_0$ for $t \in [0,T]$,
 by the definition of $T$ we have
\begin{multline}
  \label{eq:di7}
  \td{}{t} \sum_{p=p_0}^n m_{\beta p} \frac{(at)^p}{p!}
  \leq - \frac{1}{4} K_1 I^n_{\beta,\beta} (t,at)
  + \frac1t (K_1+a) C_{p_0}
  \\
  \leq 
  - \frac{1}{t}  \left(
    \frac{K_1}{4a} (E^n _\beta (t,at) - m_0)
    - (K_1+a) C_{p_0} \right)
\end{multline}
where for the last inequality we have used that 
\[
I^n _{\beta,\beta} (t,at) \geq \frac{(E^n _\beta (t,at) - m_0)}{at}.
\]
We make the additional restriction that $a < m_0 / (6C_{p_0})$, which
together with $a < K_1/2$ implies that
\begin{equation*}
  \frac{K_1}{4a} m_0 > (K_1+a) C_{p_0}.
\end{equation*}
Then, whenever $E^n_\beta (t,at) \geq 2 m_0$, 
\begin{equation}
  \label{eq:di8}
  \td{}{t} \sum_{p=p_0}^n m_{\beta p} \frac{(at)^p}{p!}
  \leq 0
\end{equation}
for any time $t \in [0,T]$ for which $E^n_\beta (t,at) \geq 2 m_0$
holds. This is true in particular when $\sum_{p=p_0}^n m_{\beta p}
\frac{(at)^p}{p!} \geq 2m_0$. We deduce that
\begin{equation}
  \label{eq:large-moments-bound}
  \sum_{p=p_0}^n m_{\beta p} \frac{(at)^p}{p!}
  \leq
  2m_0 
  \quad \text{ for } t \in [0, T].
\end{equation}
In order to finish the argument we need to bound the initial part of
the full sum (from $p=0$ to $p_0-1$.) Indeed, we note that from
\eqref{eq:small-moments},
\begin{equation}
  \label{eq:small-moments-2}
  \sum_{p=0}^{p_0-1} m_{\beta p} \frac{(at)^p}{p!}
  \leq
  m_0 + aC_{p_0}
  \quad \text{ for } t \in [0, T],
\end{equation}
so, recalling that $6 a C_{p_0} < m_0$ and using
\eqref{eq:large-moments-bound} and \eqref{eq:small-moments-2}
\begin{equation*}
  E^n_\beta (t,at) =
  \sum_{p=0}^{p_0-1} m_{\beta p} \frac{(at)^p}{p!}
  + \sum_{p=p_0}^n m_{\beta p} \frac{(at)^p}{p!}
  \leq
  3 m_0 + a C_{p_0} 
  \leq \frac{19}{6} m_0 
\end{equation*}
for $t \in [0, T]$, uniformly in $n$. 
%
Finally, gathering all conditions imposed along the proof on the
parameter $a$, we choose
\begin{equation}\label{choose-a}
 a:=
\min \left\{1, \frac{K_1}{2}, \frac{m_0}{6C_{p_0}} \right\}
\end{equation}
independently of $n$, where $K_1$ was defined in \eqref{eq:di0.13.2} and
$C_{p_0}$ in \eqref{eq:small-moments}.  We conclude, from the
definition of $T$, that $T=1$ for all $n$.  Sending
$n\rightarrow\infty$, Theorem \ref{thm:exp-moment-creation} follows.

\bigskip

In the general case $\beta \in (0,2]$, since $K_2$ in
\eqref{eq:di0.13} is not zero, equation \eqref{eq:di4.5} has an extra
term in the right hand side, namely
\begin{equation*}
  \label{eq:di4.5-K2}
  \td{}{t} m_{\beta p}
  \leq
  2 \gamma_{\beta p/2} S_{\beta,p}
  - K_1 m_{\beta(p+1)} 
+ K_2  m_{\beta p}
  \quad \text{ for } t \geq 0, \ p \geq p_0.
\end{equation*}
In this case using again that $E^n _\beta (t,at) \leq 4m_0$ on
$[0,T]$, \eqref{eq:di7} is now modified 
as 
\begin{multline}
  \label{eq:di7bis}
  \td{}{t} \sum_{p=p_0}^n m_{\beta p} \frac{(at)^p}{p!}
  \leq - \frac{1}{4} K_1 I^n _{\beta,\beta} (t,at)
  + \frac1t (K_1+a) C_{p_0} + K_2 E^n _\beta (t,at)
  \\
  \leq
  -  \frac{1}{t} \left(
    \frac{K_1}{4a} (E^n _\beta (t,at) - m_0)
    - (K_1+a) C_{p_0} \right) + 4 K_2 m_0.
\end{multline}
Hence by tuning the constants as before, at any time $t \in [0,T]$ for
which $E^n_\beta (t,at) \geq 2 m_0$ we have 
\begin{equation*}
  \td{}{t} \sum_{p=p_0}^n m_{\beta p} \frac{(at)^p}{p!}
  \leq K_3
\end{equation*}
with $K_3 = 4 K_2 m_0$. The corresponding to equation
\eqref{eq:large-moments-bound} is then
\begin{equation*}
  \label{eq:large-moments-bound-K2}
  \sum_{p=p_0}^n m_{\beta p} \frac{(at)^p}{p!}
  \leq
  2m_0 + K_3 t
  \qquad t \in [0, T].
\end{equation*}
It follows as before that 
\begin{equation*}
  E^n_\beta (t,at) 
  \leq \frac{19}{6} m_0 +K_3 t\, ,  \qquad  t \in [0, T]\, ,
\end{equation*}
uniformly in $n$.  Then $T \geq m_0 / (2 K_3)$, where $K_3$ is a
constant which depends only on $b$, the hard potential exponent
$\beta$ and initial mass and energy.  In particular for the same rate
$a$ as in \eqref{choose-a} the conclusion follows since both $a$ and
$T$ are uniform in the index $n$ and the limit in $n$ can be performed
as well.
\end{proof}
\medskip

\begin{proof}[Proof of Theorem~\ref{thm:exp-moment-propag}]
  Consider again first the case $\beta \in (0,1]$, and $s \in
  [\beta,1]$ as in \eqref{eq:hyp-exp-init}, $a >0$ to be fixed later
  and $n \in \mathbb N$. Define $T > 0$ as
  \begin{equation*}
    \label{eq:def-T}
    T := \sup\big\{t > 0 \ \text{ s.t. } \ E^n _s(t,a) < 4 m_0 \big\}.
  \end{equation*}
  This definition is consistent since $E^n _s(0,a) \le E_s(0,a) < 4 m_0$
  for $a$ small enough thanks to the assumption
  \eqref{eq:hyp-exp-init} on the initial data, and the
  Lemma~\ref{lem:moment-creation} ensures that $T>0$ for each given
  $n$. We will show that, for $a$ chosen small enough, $T=+\infty$ for
  any $n$, thus proving the theorem.

  Choose an integer $p_0 > 2/s$, to be fixed later. Starting again
  from Lemma~\ref{lem:estim-poly} (inequality~\eqref{eq:di1} with the
  choice of constants \eqref{eq:di0.13.2}), we have
  \begin{equation}
    \label{eq:di4.5bis}
    \td{}{t} m_{sp}
    \leq
    2 \gamma_{sp/2} S_{s,p}
    - K_1 m_{sp+\beta} 
    \quad \text{ for } t \geq 0, \ p \geq p_0,
  \end{equation}
  with $S_{s,p}$ given by \eqref{eq:def-S_p} and $K_1$ given by
  \eqref{eq:di0.13.2}, independent of $p$ with $p \ge 0$.
  Also, from Lemma \ref{lem:moment-creation}
  (inequality~\eqref{eq:moment-propag}) we know that there exists a
  constant $C_{s,p_0} > 0$ (depending on $s$, $p_0$) such that
  \begin{equation}
    \label{eq:small-moments-bis}
    \sum_{p=0}^{p_0} m_{sp}  \leq C_{s,p_0}
    \quad \text{ for all } t \in [0,T].
  \end{equation}
  Taking any $a < \min\{1, a_0\}$, we have
  \begin{multline*}
    \td{}{t} \sum_{p=p_0}^n m_{sp} \frac{a^p}{p!}
    \leq \sum_{p=p_0}^n \frac{a^p}{p!}  \left( 2
      \gamma_{sp/2} S_{s,p} - K_1 m_{sp+\beta} 
\right) 
    \\
    \leq 2 \sum_{p=p_0}^n \frac{a^p}{p!} \gamma_{sp/2} S_{s.p} - K_1 I^n_{s,\beta}(t,a) 
+ K_1 \sum_{p=0}^{p_0-1} m_{sp+\beta}
    \,\frac{a^{p}}{p!}
    \\
    \leq 2 \sum_{p=p_0}^n \frac{a^p}{p!} \gamma_{sp/2} S_{s,p} 
    - K_1 I^n _{s,\beta}(t,a)
+ K_1 C_{s,p_0},
  \end{multline*}
  where we have used $a < 1$ and \eqref{eq:small-moments-bis} in the
  last step.  Hence, from Lemma~\ref{lem:convolution}
  (inequality~\eqref{eq:convolution}) we obtain
  \begin{equation}
    \label{eq:di6}
    \td{}{t} \sum_{p=p_0}^n m_{sp} \frac{a^p}{p!}
    \leq I^n_{s,\beta}(t,a) \Big[
    4 {\gamma}_{s p{_0}} E^n_s(t,a) - K_1    \Big]
    + K_1 C_{s,p_0},
  \end{equation}
  where, as in the previous proof, we also choose $p_0$ such that $16
  \gamma_{s p_0/2} m_0 \leq (1/2) K_1$. 
  Then, as $E^n_s(t,a) \leq 4m_0$ for $t \in
  [0,T]$ by definition of $T$ we have
  \begin{equation*}
    \td{}{t} \sum_{p=p_0}^n m_{sp} \frac{a^p}{p!}
    \leq - \frac{1}{2} K_1 I^n_{s,\beta}(t,a)
    + K_1 C_{s,p_0}
    \leq
    -  \frac{K_1}{2a} E^n_s(t,a) + K_1 \left( \frac{m_0}{2a} + e^a \right)
    + K_1 C_{s,p_0},
  \end{equation*}
  where for the last inequality we have used that 
\begin{multline*}
I^n_{s,\beta}(t,a) \geq \int_{|v| \ge 1} \left( \sum_{p=1} ^n |v|^{sp+\beta}
  \frac{a^p}{p!}  \right) f \dd v  \ge \int_{|v| \ge 1} \left(
  \sum_{p=1} ^n |v|^{sp} \frac{a^p}{p!} \right) f \dd v \\ \ge \int_{\R^d} \left(
  \sum_{p=1} ^n |v|^{sp} \frac{a^p}{p!} \right) f \dd v - e^a \,
\int_{\R^d} f \dd v \ge 
\frac{(E^n_s(t,a) - m_0)}{a} - e^a,
\end{multline*}
  so that
  \begin{equation}\label{eq:di6.1}
  \td{}{t} \sum_{p=p_0}^n m_{sp} \frac{a^p}{p!} \le -
  \frac{K_1}{2a} E^n_s(t,a) + K_1 \left( \frac{m_0}{2a} + e^a \right) + K_1 C_{s,p_0}.   
  \end{equation}
  Next, recalling estimate \eqref{ee1} in the proof of
  Lemma~\ref{lem:moment-creation}
  \[
  \td{}{t} m_{sp} \le C' m_{sp} 
  \]
 valid for any $p \in \mathbb N$ and constant $C'$ depending only on $s$, $p$, initial mass and energy. 
 Summing in $p$, from 
$0$ to $p_0-1$, and using estimate \eqref{eq:moment-propag} 
 we obtain
  \begin{equation*}
  \td{}{t} E^n_s(t,a) \le -
  \frac{K_1}{2a} E^n_s(t,a) + K_1 \left( \frac{m_0}{2a} + e^a \right) + (K_1 + C')C_{s,p_0}  \, . 
  \end{equation*}
 This implies, by a maximum principle argument for ODEs,
  that the bound
  \[
  E^n_s(t,a) \le  m_0 + 2 a\left[ \left( 1 + \frac{C'}{K_1}\right)
    C_{s,p_0}+e^a \right]
  \]
  holds uniformly for $t\in[0,T]$, as the parameters in the right hand
  side are uniform in time.  Choosing $a$ small enough such that
  \[
m_0 + 2 a\left[ \left( 1 + \frac{C'}{K_1}\right)
    C_{s,p_0}+e^a \right]  <  4m_0\, ,
  \]
or equivalently
 \[
 a <  \min\left\{1, a_0, \frac{K_1}{2} ,\frac{3m_0}{2 \left[ \left( 1 + \frac{C'}{K_1}\right)
    C_{s,p_0}+e^a \right]} \right\} ,
  \]
  where $K_1$ was defined in \eqref{eq:di0.13.2} and $C_{s,p_0}$ in
  \eqref{eq:small-moments-bis}, proves by definition of $T$ that
  $T=+\infty$ for any $n$.  Passing to the limit $n \to +\infty$
  concludes the proof.

\bigskip

In the general case $\beta\in (0,2]$, again as in the previous proof it
follows that equation \eqref{eq:di4.5bis} has the extra positive term in
the right hand side $ K_2 m_{sp}$.  The corresponding equation to
\eqref{eq:di6} is now
\begin{multline}
    \label{eq:di6-K2}
    \td{}{t} \sum_{p=p_0}^n m_{sp} \frac{a^p}{p!}
    \leq I^n_{s,\beta}(t,a) \Big[
    4 {\gamma}_{s p{_0}} E^n_s(t,a)
    - K_1 \Big] \\ + K_2 E^n_s(t,a)
    + (K_1 +K_2) C_{s,p_0} 
  \end{multline} 
 and consequently, arguing as before we get 
\begin{equation*}
  \td{}{t} \sum_{p=p_0}^n m_{sp} \frac{a^p}{p!}
  \leq
  \left(K_2 -  \frac{K_1}{2a}\right) E^n_s(t,a) + K_1 \left(
    \frac{m_0}{2a} + e^a \right) 
  + (K_1 +K_2) C_{s,p_0}.
  \end{equation*}
  In particular, making the additional restriction that $a <
  K_1/(4K_2)$ we obtain 
  the bound
  \[
  E^n_s(t,a) \le 2 m_0 + 4 a\left[ \left( 1 + \frac{K_2}{K_1} + \frac{C'}{K_1}\right)
    C_{s,p_0}+e^a \right] 
  \]
  uniformly for  $t\in[0,T]$, where now $a$ is chosen so that
  \[
  a < \min\left\{1, a_0,   \frac{K_1}{8K_2}, 
  \frac{m_0}{2\left[ \left( 1 + \frac{K_2}{K_1} + \frac{C'}{K_1}\right)
    C_{s,p_0}+e^a \right]} \right\}  ,
  \]
  with $K_1$ and $K_2$ given in \eqref{eq:di0.13}, with $p_0$ such
  that $\gamma_{s p_0/2} < (32+ 2^{1-\beta})^{-1}$.  The proof is then
  completed as in the case $\beta\in (0,1]$ above. 
\end{proof}

\appendix 

\section{Some technical tools on moments} 

We collect here two technical calculations from previous works.

\begin{lem}[Lemma 2 in \cite{BGP04}]\label{lem:app1} 
  Assume that $p>1$, and let $k_p$ denote the integer part of
  $(p+1)/2$. Then for all $x,y >0$ the following inequalities hold
  \begin{equation*}
    \sum_{k=1} ^{k_p-1}
    {p \choose k} \left( x^k y^{p-k} + x^{p-k} y^k \right)
    \le
    (x+y)^p - x^p - y^p
    \le
    \sum_{k=1}^{k_p}
    {p \choose k} \left( x^k y^{p-k} + x^{p-k} y^k \right).
    \end{equation*}
\end{lem}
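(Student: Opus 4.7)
The inequalities are positively homogeneous of degree $p$ in $(x,y)$ and invariant under $x \leftrightarrow y$, so I may assume $0 \le x \le y$ and, dividing through by $y^p$, set $t = x/y \in [0,1]$. Adding $1 + t^p = {p \choose 0}(t^0 + t^p)$ to each of the three expressions, the claim reduces to the one-variable inequalities
\[
\sum_{k=0}^{k_p - 1}{p \choose k}(t^k + t^{p-k}) \;\le\; (1+t)^p \;\le\; \sum_{k=0}^{k_p}{p \choose k}(t^k + t^{p-k}), \qquad t \in [0,1].
\]

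For integer $p$ this is immediate from the ordinary binomial theorem: $(1+t)^p = \sum_{k=0}^p{p \choose k}t^k$, and using the symmetry ${p \choose k} = {p \choose p-k}$ this finite sum equals $\tfrac12 \sum_{k=0}^p {p \choose k}(t^k + t^{p-k})$. Truncating this symmetric sum just below the central index $p/2$ yields the lower bound and truncating at it yields the upper bound; for $p$ even the slack in each bound is exactly the central term ${p \choose p/2} t^{p/2}$, and for $p$ odd one of the two bounds is an exact equality.

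For non-integer $p > 1$ the plan is to use the absolutely convergent binomial series $(1+t)^p = \sum_{k=0}^\infty {p \choose k} t^k$ on $[0,1]$, together with the Taylor integral remainder
\[
(1+t)^p - \sum_{k=0}^{N}{p \choose k}t^k
\;=\; (N+1){p \choose N+1}\, t^{N+1} \int_0^1 (1-u)^N (1+ut)^{p-N-1}\,du,
\]
whose sign equals the sign of ${p \choose N+1}$. Rearranging, the lower bound (taking $N = k_p - 1$) becomes the inequality
\[
\sum_{k=N+1}^\infty {p \choose k}\,t^k \;\ge\; \sum_{k=0}^{N}{p \choose k}\,t^{p-k},
\]
and the upper bound (taking $N = k_p$) becomes the reverse inequality. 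The mirror exponents $p-k$ with $k \le k_p$ lie in $[(p-1)/2,\,p]$, whereas the tail exponents $k \ge k_p + 1$ satisfy $k > (p+1)/2$; on $[0,1]$ the smaller mirror exponents produce pointwise larger monomials, which is what makes such a comparison plausible.

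The main obstacle is the rigorous control of the alternating-sign tail, since ${p \choose k}$ is positive only for $k \le \lceil p \rceil$ and then strictly alternates in sign with decreasing magnitude, so one cannot compare the two sides coefficient-by-coefficient. The plan is to bundle the tail into pairs of consecutive terms (whose contributions have opposite signs and telescoping bounds) using the three-term recurrence $(k+1){p \choose k+1} = (p-k){p \choose k}$, and then match each such pair to a mirror monomial $t^{p-k}$ on the right. The specific choice $k_p = \lfloor (p+1)/2 \rfloor$ is exactly what aligns this pairing with the $k \leftrightarrow p-k$ symmetry around $p/2$, so that, just as in the integer case, the slack in the two bounds differs by a single ``central'' term.
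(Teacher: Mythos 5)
Your symmetrization, the reduction to $t=x/y\in[0,1]$, and the integer-$p$ case are all correct, but for non-integer $p$ you have not actually proved anything: you yourself identify the control of the alternating tail of the binomial series as ``the main obstacle'' and then offer only a ``plan'' (bundle consecutive tail terms in pairs via $(k+1)\binom{p}{k+1}=(p-k)\binom{p}{k}$ and match each pair to a mirror monomial $t^{p-k}$) without carrying it out. That missing step is essentially the whole content of the lemma in the only case that is not classical, and it is the case the paper needs, since Lemma \ref{lem:Povzner} and Lemma \ref{lem:estim-poly} use it with arbitrary real exponents. To see that what remains is substantive and not bookkeeping: after your rearrangement, the lower bound (with $N=k_p-1$) requires $\sum_{k\ge k_p}\binom{p}{k}t^{k}\ge\sum_{k=0}^{k_p-1}\binom{p}{k}t^{\,p-k}$ for all $t\in[0,1]$, and already at $t=1$ this reads $\sum_{k=0}^{k_p-1}\binom{p}{k}\le 2^{p-1}$, which is exactly the lower inequality of the lemma at $x=y$; no termwise or pairwise comparison can deliver it, because the non-integer mirror exponents $p-k$ interlace the integer exponents and the corresponding coefficients are not comparable one by one, so a genuinely global argument (monotonicity in $t$, induction in $p$, or exact summation of the tail) is still required. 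As written, your proposal is a complete proof only for integer $p$, together with a heuristic for the general case.

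For context, the paper itself gives no proof of this statement: it is quoted verbatim as Lemma 2 of \cite{BGP04}, where the inequality is established for all real $p>1$ at once by an elementary argument (reducing the exponent by differentiation in one variable and an induction on the integer part of $p$), which sidesteps the binomial series and its alternating tail entirely. If you want to salvage your route, the most promising fix is to abandon the pairwise matching and instead prove the one-variable inequalities by showing that the difference of the two sides vanishes appropriately at $t=0$ and has a sign-controlled derivative (or by inducting from $p$ to $p-1$ through the identity $\frac{d}{dt}\big[(1+t)^p-1-t^p\big]=p\big[(1+t)^{p-1}-t^{p-1}\big]$), rather than trying to dominate an alternating tail monomial by monomial.
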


\begin{lem}[Lemma 2 in \cite{GPV09}]\label{lem:app2}
  The energy-conserving solutions to the Boltzmann equation
  \eqref{eq:Boltzmann} on $[0,+\infty)$ with initial data $f^0 \in
  L^1(1+|v|^2)$ satisfy 
  \begin{equation*}
    \forall \, t \ge 0, \ \forall \,v \in \R^d, \quad 
    \int_{\R^d} f(t,v_*) |v-v_*|^s \, {\rm d}v_* \ge c_s \, \int_{\R^d} f^0(v_*) |v-v_*|^s \, {\rm d}v_*
  \end{equation*}
  for any $s \in (0,1]$ and some constant $c_s>0$ depending on
  $s$. This implies that
\begin{equation*}
    \forall \, t \ge 0, \ \forall \, v \in \R^d, \quad 
    \int_{\R^d} f(t,v_*) |v-v_*|^s \, {\rm d}v_* \ge C_{f^0,s}
    \, (1+|v|^s)
  \end{equation*}
  for any $s \in (0,1]$ and some constant $C_{f^0,s}>0$ depending on $s$
  and the initial data $f^0$. 
\end{lem}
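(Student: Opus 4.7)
The lemma has two displayed inequalities, the second of which follows from the first combined with a pointwise lower bound on the initial datum; I will therefore focus on establishing the propagation inequality.

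Fix $v \in \R^d$ and set $g(t,v) := \int f(t,v_*) |v-v_*|^s \dd v_*$. The plan is to differentiate in time, apply the weak form of the collision operator with test function $v_* \mapsto |v-v_*|^s$, and derive a Gronwall-type differential inequality. The weak form produces
\[
\partial_t g(t,v) = \tfrac{1}{2} \iiint f_* f_{**} \, \Delta_v \, B \dd v_* \dd v_{**} \dd \sigma,
\]
where $\Delta_v := |v-v_*'|^s + |v-v_{**}'|^s - |v-v_*|^s - |v-v_{**}|^s$ and $(v_*', v_{**}')$ are the post-collisional velocities of $(v_*, v_{**})$. The key pointwise estimate to be established is
\[
\Delta_v \ge -\tfrac{1}{2}\bigl(|v-v_*|^s + |v-v_{**}|^s\bigr)
\quad\text{for } s \in (0,1].
\]
This follows by chaining two ingredients: (a) the identity $|v-v_*'|^2 + |v-v_{**}'|^2 = |v-v_*|^2 + |v-v_{**}|^2$, obtained by expanding squares and using momentum and energy conservation $v_*' + v_{**}' = v_* + v_{**}$ and $|v_*'|^2 + |v_{**}'|^2 = |v_*|^2 + |v_{**}|^2$ (so the sum of squared distances from $v$ is preserved under the collision, even though $v$ is not a collision partner); and (b) the power-mean inequality $2^{-s/2}(x^2+y^2)^{s/2} \le x^s + y^s \le 2^{1-s/2}(x^2+y^2)^{s/2}$ valid for $s \in (0,2]$ and $x,y \ge 0$.

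Inserting this estimate, bounding $|v_*-v_{**}|^\beta \le 2^\beta(|v_*|^\beta + |v_{**}|^\beta)$, and using that $m_\beta$ is controlled by $m_0$ and $m_2$ via Jensen's inequality (since $\beta \le 2$), I reach a pointwise differential inequality of the form $\partial_t g(t,v) \ge -C_1 g(t,v) - C_2(1+|v|^s)$ with constants depending only on $\|b\|_{L^1}$, $s$, $\beta$ and the conserved mass and energy (the cross-term $\int f_* |v-v_*|^s |v_*|^\beta \dd v_*$ is controlled by $|v|^s m_\beta + m_{s+\beta}$ which, for the case $s = \beta \le 1$ relevant to the application in Lemma~\ref{lem:estim-poly}, satisfies $s + \beta \le 2$ and is thus bounded by conserved quantities). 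Integrating this inequality by variation of constants yields the first displayed bound of the lemma. The second inequality then follows by combining the first with a direct lower bound $g(0,v) \ge C_{f^0}(1+|v|^s)$: for $|v|$ large, split the integral at $\{|v_*| \le |v|/2\}$ (on which $|v-v_*| \ge |v|/2$) and apply Chebyshev's inequality $\int_{|v_*|>|v|/2} f^0 \dd v_* \le 4m_2/|v|^2$, which is $\le m_0/2$ for $|v|$ sufficiently large; for $|v|$ bounded, the continuous and strictly positive function $v \mapsto g(0,v)$ has a positive lower bound on compact sets.

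The main obstacle is obtaining a constant $c_s$ that is genuinely uniform in time, since a straightforward Gronwall integration of the pointwise differential inequality above yields only an exponentially decaying lower bound in $t$. Resolving this requires exploiting that the constants $C_1, C_2$ depend only on conserved quantities of $f$ and then iterating the propagation bound on successive time windows, restarting the argument at each step so that the lower bound is preserved.
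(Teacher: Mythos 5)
Your preliminary computations are fine: the invariance of $|v-w'|^2+|v-w'_*|^2$ under the collision of $(w,w_*)$ and the subadditivity of $r\mapsto r^{s/2}$ do give $\Delta_v\ge -\tfrac12\bigl(|v-w|^s+|v-w_*|^s\bigr)$, and the derivation of the second displayed inequality from the first (Chebyshev for large $|v|$, positivity and continuity of $g(0,\cdot)$ on compact sets) is sound. The gap is at the decisive step. From $\partial_t g(t,v)\ge -C_1 g(t,v)-C_2(1+|v|^s)$, variation of constants yields only $g(t,v)\ge e^{-C_1 t}g(0,v)-\tfrac{C_2}{C_1}\bigl(1-e^{-C_1 t}\bigr)(1+|v|^s)$, which is not the first display of the lemma: it decays exponentially in time and can even become negative, whereas the lemma asserts a lower bound with a constant $c_s$ uniform in $t$ (and, in \cite{GPV09}, depending only on $s$). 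You acknowledge the obstacle, but the proposed repair --- restarting the argument on successive time windows --- cannot remove it: a window of length $T$ produces at best a factor $e^{-C_1T}<1$ relative to the value at the start of that window, so $n$ windows give $e^{-nC_1T}g(0,v)$, i.e.\ exactly the same exponential decay. A restart would only help if at the beginning of each window you could re-anchor the bound by something independent of the previous window, i.e.\ $g(t,v)\ge c(1+|v|^s)$ with $c$ determined by conserved quantities alone; no such bound exists, since mass, momentum and energy do not preclude concentration (data close to a Dirac mass make $\inf_{v}g$ arbitrarily small at small positive times), and the remark after Lemma \ref{lem:estim-poly} explicitly notes that this constant cannot be expressed in terms of the mass and energy of $f^0$. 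So the time-uniformity --- the whole content of the lemma --- is not obtained: it must come from retaining the gain part of the bracket (which you discard once you bound $\Delta_v$ below by the loss alone), not from Gronwall plus iteration. Note also that the paper itself gives no proof of Lemma \ref{lem:app2}; it is quoted verbatim from \cite{GPV09} (Lemma 2 there), so the comparison here is necessarily with the cited source rather than with an in-paper argument.

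Two secondary points. First, your bound on the cross term uses $m_{s+\beta}(t)$; under the hypotheses of the lemma ($s\in(0,1]$, $\beta\in(0,2]$) one can have $s+\beta>2$, so this is not ``controlled by conserved quantities'', and invoking creation or propagation of higher moments inside this proof is delicate because Lemma \ref{lem:app2} is itself an ingredient of the moment machinery (it supplies the constant $\bar C_\beta$ in Lemma \ref{lem:estim-poly}); your restriction to $s=\beta\le 1$ avoids this but proves less than the statement. Second, even if the Gronwall scheme could be salvaged, its constants would depend on $b$, $\beta$ and the mass and energy, whereas the statement claims $c_s$ depends on $s$ only; this is harmless for the way the lemma is used in the paper, but it is a further sign that the intended proof is structurally different from the one you outline.
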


\signra 
\signjc
\signig
\signcm

\begin{thebibliography}{10}

\bibitem{AL2010}
{\sc Alonso, R.~J., and Lods, B.}
\newblock Free cooling and high-energy tails of granular gases with variable
  restitution coefficient.
\newblock {\em SIAM J. Math. Anal. 42}, 6 (2010), 2499--2538.

\bibitem{B88}
{\sc Bobylev, A.~V.}
\newblock The theory of the nonlinear spatially uniform {B}oltzmann equation
  for {M}axwell molecules.
\newblock In {\em Mathematical physics reviews}, Soviet Sci. Rev. Sect. C Math.
  Phys. Rev. Harwood Academic Publishers, Chur, 1988, pp.~111--233.

\bibitem{Bobylev1996Moment}
{\sc Bobylev, A.~V.}
\newblock {Moment inequalities for the {B}oltzmann equation and applications to
  spatially homogeneous problems}.
\newblock {\em J. Stat. Phys. 88}, 5 (1996), 1183--1214.

\bibitem{BGP04}
{\sc Bobylev, A.~V., Gamba, I.~M., and Panferov, V.~A.}
\newblock Moment inequalities and high-energy tails for {B}oltzmann equations
  with inelastic interactions.
\newblock {\em J. Stat. Phys. 116}, 5 (2004), 1651--1682.

\bibitem{MR1233644}
{\sc Desvillettes, L.}
\newblock Some applications of the method of moments for the homogeneous
  {B}oltzmann and {K}ac equations.
\newblock {\em Arch. Rational Mech. Anal. 123}, 4 (1993), 387--404.

\bibitem{GPV09}
{\sc Gamba, I., Panferov, V., and Villani, C.}
\newblock Upper {M}axwellian bounds for the spatially homogeneous {B}oltzmann
  equation.
\newblock {\em Arch. Rational Mech. Anal. 194}, 1 (2009), 253--282.

\bibitem{IkTr:56}
{\sc Ikenberry, E., and Truesdell, C.}
\newblock On the pressures and the flux of energy in a gas according to
  {M}axwell's kinetic theory. {I}.
\newblock {\em J. Rational Mech. Anal. 5\/} (1956), 1--54.

\bibitem{Lu-Mouhot}
{\sc Lu, X., and Mouhot, C.}
\newblock On measure solutions of the {B}oltzmann equation, part~{I}: {M}oment
  production and stability estimates.
\newblock {\em J. Diff. Equations}, 252 (2012), 3305--3363.

\bibitem{MR2264623}
{\sc Mischler, S., and Mouhot, C.}
\newblock Cooling process for inelastic {B}oltzmann equations for hard spheres.
  {II}. {S}elf-similar solutions and tail behavior.
\newblock {\em J. Stat. Phys. 124}, 2-4 (2006), 703--746.

\bibitem{MR1697562}
{\sc Mischler, S., and Wennberg, B.}
\newblock On the spatially homogeneous {B}oltzmann equation.
\newblock {\em Ann. Inst. H. Poincar\'e Anal. Non Lin\'eaire 16}, 4 (1999),
  467--501.

\bibitem{citeulike:2703083}
{\sc Mouhot, C.}
\newblock Rate of convergence to equilibrium for the spatially homogeneous
  {B}oltzmann equation with hard potentials.
\newblock {\em Comm. Math. Phys. 261}, 3 (2006), 629--672.

\bibitem{citeulike:2703167}
{\sc Wennberg, B.}
\newblock Entropy dissipation and moment production for the {B}oltzmann
  equation.
\newblock {\em J. Stat. Phys. 86}, 5 (1997), 1053--1066.

\end{thebibliography}
\end{document}